\@undefined\usepackage[usenames,dvips]{color}
\else\usepackage[usenames,dvipsnames]{color}
\newtheorem{theorem}{Theorem}[section]
\newtheorem{prop}[theorem]{Proposition}
\newtheorem{lemma}[theorem]{Lemma}
\newtheorem{cora}[theorem]{Corollary}
\theoremstyle{definition}
\newtheorem{assumption}[theorem]{Assumption}
\newtheorem{example}[theorem]{Example}
\newtheorem{remark}[theorem]{Remark}
\def\<{\langle} 
\def\>{\rangle}
\renewcommand{\H}{\ensuremath{\mathds{H}}\xspace}
\DeclareMathOperator{\e}{e}
\DeclareMathOperator\cut{cut}
\def\loc{\mathrm{loc}}
\def\M{\mathbb M}
\def\m{\mu}
\def\E{\mathds E}
\def\F{\mathscr F}
\def\G{\mathscr G}
\def\P{\mathds P}
\def\SS{\mathbb S}
\def\C{\mathscr C} 
\def\PP{\mathscr P}
\def\C{C}
\def\EC{\mathcal E}
\def\B{\mathscr B}
\def\R{\mathds R}
\DeclareFontFamily{U}{bbold}{}
\DeclareFontShape{U}{bbold}{m}{n}{<-5>bbold5<6>bbold6<7>bbold7<8>bbold8
<9>bbold9<9-11>bbold10<11-14>bbold12<14->bbold12}{}
\DeclareSymbolFont{bbold}{U}{bbold}{m}{n}
\DeclareSymbolFontAlphabet{\mathbbold}{bbold}
\DeclareMathSymbol{\Eins}{\mathord}{bbold}{`1}
\numberwithin{equation}{section}
\begin{document}

\title[Escape Rate of Diffusion Process]{Volume growth, Comparison theorem and Escape Rate of Diffusion Process}
\author{Shunxiang Ouyang}
\address{Department of Mathematics, Bielefeld University, 33615, 
Bielefeld, Germany}
\email{souyang@math.uni-bielefeld.de}

\begin{abstract}
We study the escape rate of diffusion process with two approaches. We first give an upper rate function for the diffusion process associated with a symmetric, strongly local regular Dirichlet form. The upper rate function is in terms of the volume growth of the underlying state space. The method is due to Hsu and Qin [Ann. Probab., 38(4), 2010] where an upper rate function was given for Brownian motion on Riemannian manifold.  In the second part, we prove a comparison theorem and give an upper rate function for diffusion process on Riemannian manifold in terms of the upper rate function for the solution process of a one dimensional stochastic differential equation.  
\end{abstract}

\thanks{Supported by SFB 701 of the German Research Council}
\date{\today }
\subjclass[2010]{58J65,   
          60J60,   
          31C25   
          }
\keywords{Escape rate, diffusion process, Dirichlet form, volume growth, comparison theorem}

\maketitle


\section{Introduction} 
Let $X_t$, $t\geq 0$, be a diffusion process over some metric space $E$. Let $\partial$ be the extra point in the one-point compactification of $E$. Usually $\partial$ is regarded as the cemetery of the process. For every $x\in E$, let $\P_x$ denote the law of $X_t$ starting from $x$. Let $\rho$ be a nonnegative function on $E$ such that $\rho(x)$ tends to infinity as $x$ wanders out to $\partial$. If there is a non-decreasing positive function $R(t)$ on $\R_+$ such that 
\[
    \P_x(\rho(X_t)\leq R(t)\ \mathrm{for\ all\ sufficiently\ large}\ t)=1, 
\]
then we call $R(t)$ an upper rate function or an upper radius for $X_t$ with respect to $\rho$.
It is an upper bound of the escape rate of $X_t$. 

As an example, for every
$\varepsilon>0$, $(1+\varepsilon)\sqrt{2t\log\log t}$ is an upper rate function for
the Brownian motion $B_t$ 
on a Euclidean space with respect to the Euclidean metric. 
This follows easily 
from the celebrated Khinchin law of iterated logarithm which 
 asserts that 
\begin{equation}\label{ILL}
\varlimsup_{t\to \infty} \frac{|B_t|}{\sqrt{2t\log\log t}}=1
\end{equation}
almost surely. 

The aim of this paper is to study upper rate functions for diffusion processes. 
Note that it is a classical question to ask how fast a diffusion process wanders out to the cemetery. For instance,  \cite{Fri06, GS72, IM74,Mao08,Mot59,SW73} studied the asymptotic behavior of diffusion process. In particular, \cite{Gri99_AA, GH09, HQ10} studied the escape rate of Brownian motion on Riemannian manifold. 

A related problem is the conservativeness of diffusion process, see e.g. \cite{Gri99_Bull, Stu94_JRAM, Tak89, Tak91} and references therein. It is clear that if we get a finite upper bound of the escape rate of the process, then it follows immediately that the process is conservative. 

Intuitively, if a process gets much free space to live on then it can run to the cemetery faster. So it is natural to characterize escape rate in terms of volume growth of the underlying state space. Indeed, \cite{GH09,HQ10} obtained upper rate functions for Brownian motions on Riemannian manifolds in terms of the volume growth of concentric balls.


It is well known that local Dirichlet form is an appropriate frame to unify and extend results for Brownian motion on a complete Riemannian manifold. For instance,  it is in this spirit that \cite{BS05} and \cite{Stu95_PrPr} generalized the results in \cite{Gri99_AA} and \cite{Gri99_Bull} respecitively.  
The first aim of this paper is to extend the main result in \cite{HQ10} from the Riemmanian setting to the framework of Dirichlet form.  More precisely, we give an upper rate function with respect to a nonnegative function $\rho$ on the state space for the symmetric diffusion process associated with a regular, local and symmetric Dirichlet form (cf. Theorem \ref{Theoem_rate}). The upper rate function is  in terms of the volume growth of the underlying state space and the growth of the energy density $\Gamma(\rho,\rho)$ of the function $\rho$.

It is canonical to take $\rho$ as the intrinsic metric associated with the Dirichlet form. Then,  under some topological condition,  the energy density  $\Gamma(\rho,\rho)$ is bounded above by 1 and hence the volume growth of the metric ball in the state space is the the unique condition.  Note that the volume of a metric ball involves exactly two of the most essential ingredients of Dirichlet space, i.e. the measure on the state space and the metric induced intrinsically from the Dirichlet form.  However, without additional geometric condition, one cannot always expect to get optimal upper rate function in terms of the volume growth condition only. For example, for Brownian motion on Euclidean space, using volume growth condition only, we can obtain upper rate function of order $\sqrt{t\log t}$ which is rougher than the consequence induced by the iterated logarithm law \eqref{ILL}. 

So in the second part of this paper, we turn to consider geometric condition for the  diffusion process with infinitesimal generator $L$ on a Riemannian manifold $M$. Let $\rho_o$ be the distance function with respect to some fixed point  $o$ on $M$.
 Suppose that there is a measurable function $\theta$ on $[0,+\infty)$ such that 
\begin{equation*}\label{Equ:L_rho<theta:sec:intro}
L\rho_o\leq \theta(\rho_o)
\end{equation*}
on $M\setminus \mathrm{cut}(o)$. Here  $\mathrm{cut}(o)$ is the cut locus of $o$. 
Under this condition, we establish a comparison theorem (cf. Theorem \ref{Thm:SemigroupComp}) to compare the radial process of the diffusion process  with respect to a scaled Brownian motion with drift $\theta$. Then upper rate function for the diffusion process on manifold is controlled by the upper rate function for the solution process of a one dimensional stochastic differential equation. 

This paper is organized as follows. 
In Section \ref{Sec2} we extend the main result in \cite{HQ10}. Then in Sections \ref{Sec:examples_Dirichlet} we give some examples to illustrate how to apply the extended result. 
In Section \ref{Sec:Comparison-Thm} we prove a comparison theorem for escape rate.  In Section \ref{Sec:escape-one-dim} we study briefly upper rate functions for one dimensional It\^o diffusion processes. 

\section{Escape rate of Dirichlet process}\label{Sec2}
Consider a measure space $(E,\B, \m)$, where 
$E$ is a locally compact separable Hausdorff space, and $\m$ is a positive Radon measure on $E$ with full support. Let $\H=L^2(E,\m)$ be the Hilbert space of square integrable (with respect to $\m$) extended real valued functions on $E$
endowed with the usual inner product $\<\cdot,\cdot\>$ and norm $\|\cdot\|_2$. Let  $\C(E)$ be the space of continuous functions on $E$ and
$\C_0(E)$ the space of all continuous functions with compact support on $E$. 
Let $\partial$ be the extra point in the one-point compactfication of $E$: $E\cup {\partial}$ is compact. 

We consider a symmetric regular and strongly local Dirichlet form  $(\EC,\F)$  with definition domain $\F$ on $\H$. Let us recall these notions shortly (see \cite{FOT94} for more details). We call $\EC$ symmetric if $\EC(f,g)=\EC(g,f)$ for all  $f,g\in \F$.
The Dirichlet form $\EC$ is called regular if it posseses a core in $\F$. A core is subset of $\F\cap \C_0(E)$ that is dense in $\C_0(E)$ for the uniform norm and dense in $\F$ for the norm $\sqrt{\|f\|_2^2+\EC(f,f)}$. 
$\EC$ is called strongly local if $\EC(f,g)=0$ for every $f,g\in \F$ with compact supports satisfying the condition that $g$ is constant on a neighborhood of the support of $f$. A regular Dirichlet form is strongly local if and only if both the jumping measure and the killing measure of the Dirichlet form vanish.

It is well known that (cf. \cite{BH91,FOT94}) for any $f,g\in \F$, there is a signed Radon measure $\m_{\<f,g\>}$, the energy measure of $f$ and $g$ on $E$, such that
\[
\EC(f,g)= \int_E d \m_{\<f,g\>}, \quad f,g\in \F. 
\]
Obviously, $\m_{\<f,g\>}$ is a positive semidefinite symmetric bilinear form in $f,g$ and it enjoys locality property. Hence the definition domain $\F$ of the energy measure can be extended to $\F_\loc$. Here $\F_{\loc}$ is the set of all $\m$-measurable functions $f$ on $E$ for which on every relatively compact open set $\Omega\subset E$, there exists a function $f'\in \F$ such that $f=f'$ $\m$-a.s. on $\Omega$. 

We shall assume the following assumption. 

\begin{assumption}\label{Assumption_rho}
Suppose that there is a  nonnegative
function $\rho$ on $E$ satisfying the following conditions: 
\begin{enumerate}
 \item  $\rho\in \F_\loc\cap \C(E)$. 
 \item \(\lim_{x\to \partial}\rho(x)=+\infty\).
   \item For any $r>0$, the ball $B(r):=B_\rho(r):=\{x\in E\colon \rho(x)\leq r \}$ is compact. 
  \item The energy measure $\m_{\<\rho,\rho\>}$ of $\rho$ is absolutely continuous with respect to $\m$.  
\end{enumerate}
\end{assumption}

When the energy measure $\m_{\<\rho,\rho\>}$ of $\rho$ is absolutely continuous with respect to $\m$, we set 
$$\Gamma(\rho,\rho)= \frac{d \m_{\<\rho,\rho\>}}{d \m}$$
and call it the energy density of $\rho$ with respect to $\m$. 
For any $r>0$, let 
       \[
            \lambda(r):=\lambda_\rho(r):=\sup_{y\in B(r)} \Gamma(\rho,\rho)(y)   .     
       \]
It desribes the growth of the energy density $\Gamma(\rho,\rho)$.

Associated with the Dirichlet space $(\EC, \F)$, there is a canonical diffusion process $(\PP(E), X_t, 0\leq t< \zeta,\P_x)$. Here  $\PP(E)$ is the path space over $E$ (i.e., all continuous real valued functions on $[0,\zeta)$), $X_t$ is the coordinate process defined via $X_t(\omega)=\omega(t)$ for all $\omega\in \PP(E)$ and $0\leq t< \zeta$, $\zeta$ is the life time of $X_t$, and $\P_x$ is the law of $X_t$ with initial point $x\in E$. Recall that $(\EC, \F)$ is called conservative if 
	$$\P_x(\zeta=\infty)=1$$
for all $x\in E$. 

Now we are ready to state the following result on the escape rate of $X_t$. 
As indicated in Example \ref{Classical-BM},  this is a generalization of \cite[Theorem 4.1]{HQ10}.
\begin{theorem}\label{Theoem_rate}
Suppose that there is a nonnegative function $\rho$ on $E$
 satisfying Assumption \ref{Assumption_rho}. 
Let $x\in E$ and  $\psi$ be a function on $[0,+\infty)$ determined by 
\begin{equation}\label{volume_condition}
  t=\int_{2}^{\psi(t)}
         \frac{r}{\lambda(r)\left(\log \m(B(r))+\log\log r\right)}
  \,dr. 
\end{equation}
Then there exists a constant $C$ such that $\psi(Ct)$ is an upper rate function for $X_t$ with respect to $\rho$, that is
\begin{equation}\label{Equ:rate_fnt_psi}
   \P_x(\rho(X_t)\leq \psi(Ct)\ \mathrm{for\ all\ }t\ \mathrm{sufficiently\ large})=1.
\end{equation}
\end{theorem}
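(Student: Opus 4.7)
\medskip

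\noindent\textbf{Proof proposal.} The plan is to mimic the Hsu--Qin scheme in the Dirichlet form setting: establish a sharp Gaussian-type exit time estimate for balls $B(r)$, discretize time along a suitable sequence $t_n\nearrow\infty$, and apply the Borel--Cantelli lemma after checking that the function $\psi$ defined by \eqref{volume_condition} makes the resulting tail sum summable.

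\emph{Step 1 (exit time estimate).} The main analytical input is to show that for $x\in B(r_0)$ and $r>\rho(x)$,
\begin{equation*}
\P_x\bigl(\tau_{B(r)}\le t\bigr)\le C_1\,\m(B(r))^{1/2}\,\exp\!\left(-\frac{(r-\rho(x))^2}{C_2\,t\,\lambda(r)}\right),
\end{equation*}
where $\tau_{B(r)}$ denotes the first exit time from $B(r)$. In the strongly local Dirichlet form setting this can be obtained from the Davies--Gaffney $L^2$ off-diagonal heat kernel bound together with a Takeda-type inequality bounding exit probabilities in terms of off-diagonal transition estimates; equivalently, one may construct the exponential supermartingale $\exp(\alpha\rho(X_t)-\tfrac12\alpha^2\lambda(r)t)$ stopped at $\tau_{B(r)}$ via the Fukushima decomposition (here Assumption~\ref{Assumption_rho}(4) and the definition of $\lambda(r)$ are exactly what is needed to control $\Gamma(\rho,\rho)$ uniformly on $B(r)$) and optimize in $\alpha$. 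Either route uses only the data stated in the hypotheses.

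\emph{Step 2 (discretization and Borel--Cantelli).} Set $t_n=n$ and $R_n=\psi(Ct_n)$ for a constant $C$ to be chosen. By the defining relation \eqref{volume_condition}, $\psi$ is increasing and
\[
\psi'(t)=\frac{\lambda(\psi(t))\bigl(\log\m(B(\psi(t)))+\log\log\psi(t)\bigr)}{\psi(t)},
\]
so the increments $R_{n+1}-R_n$ satisfy
\[
(R_{n+1}-R_n)^2\;\gtrsim\;C^2\,\lambda(R_{n+1})\bigl(\log\m(B(R_{n+1}))+\log\log R_{n+1}\bigr).
\]
Plugging this into the estimate of Step 1 with $t=t_{n+1}-t_n=1$, the exponent becomes at most $-c\,C\bigl(\log\m(B(R_{n+1}))+\log\log R_{n+1}\bigr)$, which after combining with the prefactor $\m(B(R_{n+1}))^{1/2}$ yields, for $C$ large enough,
\[
\sum_n \P_x\bigl(\tau_{B(R_n)}\le t_{n+1}\bigr)\;\lesssim\;\sum_n \frac{1}{(\log R_n)^{1+\epsilon}}<\infty.
\]
By Borel--Cantelli, $\P_x$-a.s.\ $\tau_{B(R_n)}>t_{n+1}$ for all sufficiently large $n$, i.e.\ $\sup_{s\le t_{n+1}}\rho(X_s)\le R_n$ eventually.

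\emph{Step 3 (interpolation).} For $t$ large, pick $n$ with $t_n\le t<t_{n+1}$. Then $\rho(X_t)\le R_n=\psi(Ct_n)\le \psi(Ct)$ by monotonicity of $\psi$, giving \eqref{Equ:rate_fnt_psi}.

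\emph{Main obstacle.} The delicate point is Step~1: obtaining a Gaussian exit time bound in which the \emph{correct} effective diffusivity $\lambda(r)$ appears (rather than a crude global constant). Since the theorem works with an arbitrary $\rho$ that need not be the intrinsic metric of $(\mathcal E,\mathcal F)$, one cannot quote Davies--Gaffney off-the-shelf for the intrinsic distance; instead one has to reprove it using the cutoff/perturbation argument of Davies applied to the energy measure of $\rho$, which is where Assumption~\ref{Assumption_rho}(4) and compactness of balls $B(r)$ from (3) enter crucially. Once this is in hand, Steps~2--3 are a largely calculus-level verification designed into the very shape of \eqref{volume_condition}.
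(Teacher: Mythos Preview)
Your scheme diverges from the paper's in two places, and both contain genuine gaps.

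\textbf{Step 1.} The exponential supermartingale you propose does not exist in this generality. Fukushima's decomposition gives $\rho(X_t)-\rho(X_0)=M_t+N_t$ with $\langle M\rangle_t=\int_0^t\Gamma(\rho,\rho)(X_s)\,ds$, but the zero-energy part $N_t$ is in general neither of bounded variation nor one-sided bounded, so $\exp\bigl(\alpha\rho(X_t)-\tfrac12\alpha^2\lambda(r)t\bigr)$ need not be a supermartingale. Davies--Gaffney gives only an $L^2$--$L^2$ off-diagonal bound, and turning that into a \emph{pointwise} estimate $\P_x(\tau_{B(r)}\le t)\le\cdots$ requires extra regularity (this is exactly the ``additional geometric condition'' the paper says is needed for the analytic route of \cite{GH09}). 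The paper circumvents this entirely: it works with the averaged law $\P_{B_1}$, applies the Lyons--Zheng forward/backward martingale decomposition on the reflected (hence conservative) process in $B_n$, and reduces to tail bounds for a time-changed one-dimensional Brownian motion. Because Lyons--Zheng needs the stationary symmetric measure, the estimate is obtained for $\P_{B_1}$, not for $\P_x$; only at the very end is $\P_{B_1}(H)=1$ upgraded to $\P_x(H)=1$ via the observation that $z\mapsto\P_z(H)$ is $L$-harmonic together with a Liouville theorem. Your proposal has no analogue of this last step.

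\textbf{Step 2.} The inequality you assert for the increments is false. Take $\lambda\equiv1$ and $\m(B(r))\asymp r^d$; then $\psi(t)\asymp\sqrt{t\log t}$, so with $t_n=n$ one has $R_{n+1}-R_n\asymp\sqrt{\log n}/\sqrt{n}$ and $(R_{n+1}-R_n)^2\asymp(\log n)/n\to0$, whereas your claimed lower bound $C^2\lambda(R_{n+1})\bigl(\log\m(B(R_{n+1}))+\log\log R_{n+1}\bigr)\asymp\log n\to\infty$. The exponent you feed into Step~1 therefore does not absorb the prefactor and the series diverges. The paper avoids this by choosing the \emph{radii} first, geometrically $R_n=2^nc$, so that $r_n=R_n-R_{n-1}=R_n/2$ is comparable to $R_n$; then it picks $t_n=r_n^2/\bigl(32\lambda(R_n)(\log\m(B_n)+\log\log R_n)\bigr)$ to make the crossing estimate summable, and only afterwards identifies $T_n=\sum t_k$ with $\phi(R_{n+1})/256$ to read off $\psi$. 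In other words, the shape of \eqref{volume_condition} emerges from the choice of $R_n$ and $t_n$, not the other way around; your inversion of this logic is what makes the arithmetic fail.
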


Consequently, we have the following criterion for conservativeness of the diffusion process. 

\begin{cora}\label{Coro-explosion}
Let $\rho$ be a nonnegative function on $E$ satisfying Assumption \ref{Assumption_rho}. 
If \begin{equation}\label{Equ:explosion-cond}
     \int_{2}^{\infty}
         \frac{r}{\lambda(r)\left(\log \m(B(r))+\log\log r\right)}
  \,dr=\infty, 
\end{equation}
then the Dirichlet form $(\EC,\F)$ is conservative. 
\end{cora}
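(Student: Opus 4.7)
The plan is to deduce the corollary directly from Theorem \ref{Theoem_rate}. The first observation is that under the divergence hypothesis \eqref{Equ:explosion-cond}, the function $\psi$ defined implicitly by \eqref{volume_condition} is finite-valued on all of $[0,+\infty)$. The integrand $r/[\lambda(r)(\log\m(B(r))+\log\log r)]$ is positive on $[2,\infty)$, so as $R$ increases from $2$ to $\infty$, the function $R\mapsto\int_2^R r/[\lambda(r)(\log\m(B(r))+\log\log r)]\,dr$ is continuous, non-decreasing, and tends to $+\infty$ by \eqref{Equ:explosion-cond}. Consequently, for each $t\geq 0$ there exists a finite $\psi(t)\geq 2$ solving \eqref{volume_condition}, and the same holds with $t$ replaced by $Ct$ for any constant $C>0$.

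Next, I would apply Theorem \ref{Theoem_rate} to obtain a constant $C$ for which $\psi(Ct)$ is an upper rate function. This means that for every $x\in E$,
\[
   \P_x\bigl(\rho(X_t)\leq \psi(Ct)\text{ for all sufficiently large }t\bigr)=1,
\]
so on a $\P_x$-full event $\Omega_x$ there is a random time $T(\omega)<\infty$ with $X_t(\omega)\in B(\psi(Ct))$ whenever $T(\omega)\leq t<\zeta(\omega)$.

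Finally, I would convert this pathwise confinement into non-explosion. By Assumption \ref{Assumption_rho}(2)--(3), the balls $B(r)$ are compact and exhaust $E$ as $r\to\infty$, so $\zeta$ is the first time the path exits each $B(r)$. If $\zeta(\omega)<\infty$ for some $\omega\in\Omega_x$, then by definition $\rho(X_t(\omega))\to+\infty$ as $t\uparrow\zeta(\omega)$, which contradicts the bound $\rho(X_t(\omega))\leq\psi(C\zeta(\omega))<\infty$ valid for all $t$ close enough to $\zeta(\omega)$. Hence $\P_x(\zeta=\infty)=1$ for every $x\in E$, as required. The only point that needs care is the interpretation of the event in \eqref{Equ:rate_fnt_psi} on trajectories that may have already been sent to the cemetery: adopting the natural convention $\rho(\partial)=+\infty$ (compatible with Assumption \ref{Assumption_rho}(2)) and the standard characterization of $\zeta$ as the exit time from the exhausting sequence $\{B(r)\}_{r>0}$ makes the deduction formal, and the argument is otherwise routine bookkeeping.
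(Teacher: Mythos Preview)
Your proof is correct and follows essentially the same approach as the paper: both deduce conservativeness directly from Theorem~\ref{Theoem_rate} by observing that the divergence condition \eqref{Equ:explosion-cond} makes $\psi$ finite-valued, and then that a finite upper rate function precludes explosion. The only cosmetic difference is that the paper quotes the intermediate bound $\sup_{t\leq T}\rho_t\leq\psi(CT)$ obtained inside the proof of Theorem~\ref{Theoem_rate} and concludes via $\{T<\zeta\}=\{\sup_{t\leq T}\rho_t<\infty\}$, whereas you work from the stated conclusion \eqref{Equ:rate_fnt_psi} and the convention $\rho(\partial)=+\infty$; both routes are equivalent.
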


There is a canonical  choice of the function $\rho$ which satisfies Assumption \ref{Assumption_rho}. Let 
\[
   d(x,y):=\sup\{ f(x)-f(y)\colon f\in \F\cap \C_0(E),
                 d\m_{\<f,f\>} \leq d\m \ \mathrm{on}\ E \}
\]
for every $x,y\in E$.  
Here  $d\m_{\<f,f\>} \leq d\m$ means that the energy measure $d\m_{\<f,f\>}$ is absolutely continuous with respect to the reference measure $\m$ and the Radon-Nikodym derivative $$\frac{d \m_{\<f,f\>}}{d\m}\leq 1$$
$\m$-a.s. on $E$.   

The function $d$ is called  intrinsic metric. We refer to \cite{BM91, BM95, Stu94_JRAM, Stu95_PrPr, Stu98_NDDF} etc. for the details. 
For any $x\in E$, let 
$$\rho_x(z):=d(z,x)\quad \mathrm{for\ all}\ z\in E.$$
We need the following assumption such that 
$\rho_x$ satisfies Assumption \ref{Assumption_rho}.

\begin{assumption}\label{Top_Assumption}
 The intrinsic metric $d$ is a complete metric on $E$ and the topology induced by $d$ is equivalent to the original topology on $E$. 
\end{assumption}

By Assumption \ref{Top_Assumption},  we have (see \cite[Lemma 1$'$]{Stu94_JRAM})  $\rho_x\in \F_\loc\cap \C(E)$ and 
\begin{equation}\label{gradient_prop}
\Gamma(\rho_x,\rho_x)\leq 1.
\end{equation}
Moreover, $B_{\rho_x}(r)$ is compact for every $r>0$.  So Assumption \ref{Assumption_rho} holds with $\lambda(r)\leq 1$.  Thus  Theorem \ref{Theoem_rate} implies the following corollary.

\begin{cora}\label{cora_of_Theoem_rate}
Suppose that Assumption \ref{Top_Assumption} holds. 
Let $x\in E$ and $\psi$ be a function on $[0,+\infty)$ determined by 
\begin{equation}\label{volume_condition}
  t=\int_{2}^{\psi(t)}
         \frac{r}{\log \m(B_{\rho_x}(r))+\log\log r}
  \,dr. 
\end{equation}
Then there exists a constant $C$ such that $\psi(Ct)$ is an upper rate function for $X_t$ with respect to $\rho_x$. That is,  
\begin{equation}\label{Equ:rate_fnt_psi}
   \P_x(\rho_x(X_t)\leq \psi(Ct)\ \mathrm{for\ all\ }t\ \mathrm{sufficiently\ large})=1.
\end{equation}
\end{cora}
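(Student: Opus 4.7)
The plan is to reduce the corollary directly to Theorem \ref{Theoem_rate} applied with $\rho = \rho_x$. All that needs to be done is to verify that $\rho_x$ satisfies the four items of Assumption \ref{Assumption_rho} under the extra topological hypothesis Assumption \ref{Top_Assumption}, and to check that with $\lambda(r)\le 1$ the implicit equation defining $\psi$ in Theorem \ref{Theoem_rate} collapses to the formula \eqref{volume_condition} stated in the corollary.

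The first and fourth items, together with the bound $\lambda(r)\le 1$, are handed to us by \cite[Lemma 1$'$]{Stu94_JRAM}: that result gives $\rho_x\in\F_\loc\cap\C(E)$ as well as $\Gamma(\rho_x,\rho_x)\le 1$ recorded in \eqref{gradient_prop}, so that $\m_{\<\rho_x,\rho_x\>}$ is absolutely continuous with respect to $\m$ and
\[
\lambda(r)=\sup_{y\in B_{\rho_x}(r)}\Gamma(\rho_x,\rho_x)(y)\le 1
\]
for every $r>0$. For items (2) and (3), I would invoke a Hopf--Rinow type property for strongly local regular Dirichlet forms under the completeness and topological-equivalence assumptions of Assumption \ref{Top_Assumption} (in the form used in the Sturm references cited in the excerpt): this yields compactness of each closed intrinsic ball $B_{\rho_x}(r)$, which is exactly item (3). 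Item (2) then follows from the structure of the one-point compactification $E\cup\{\partial\}$: any net $z\to\partial$ must eventually leave every compact subset of $E$, hence every $B_{\rho_x}(r)$, so $\rho_x(z)\to+\infty$.

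With Assumption \ref{Assumption_rho} verified and $\lambda\le 1$, Theorem \ref{Theoem_rate} applies to $\rho_x$; substituting $\lambda\equiv 1$ into the defining equation of $\psi$ in that theorem yields exactly \eqref{volume_condition} as stated here, and the conclusion of Theorem \ref{Theoem_rate} is precisely the claimed upper rate function. The only step beyond bookkeeping is the compactness of intrinsic balls, which is where Assumption \ref{Top_Assumption} is essentially used; this is classical in the Dirichlet form setting, so no genuine obstacle arises, and the corollary is a direct specialization of Theorem \ref{Theoem_rate}.
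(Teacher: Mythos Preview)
Your proposal is correct and follows essentially the same route as the paper: the paper also cites \cite[Lemma~1$'$]{Stu94_JRAM} for $\rho_x\in\F_\loc\cap\C(E)$ and $\Gamma(\rho_x,\rho_x)\le 1$, invokes the completeness/topological-equivalence hypothesis to get compactness of intrinsic balls, and then applies Theorem~\ref{Theoem_rate} with $\lambda(r)\le 1$. The only point either treatment leaves implicit is the trivial monotonicity step that replacing $\lambda(r)\le 1$ by $1$ in the defining integral yields a larger $\psi$, which is still an upper rate function.
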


\begin{remark}
In terms of the intrinsic metric, Corollary \ref{Coro-explosion} 
 naturally gives a probabilistic proof of Sturm's conservativeness test \cite[Theorem 4]{Stu94_JRAM}: $(\EC,\F)$
is conservative if for some $x\in E$, 
\begin{equation}\label{explode_test}
   \int_2^\infty \frac{r}{\log \m (B{\rho_x}(r))}\,dr=\infty.
\end{equation} 
Note that in this case the extra term $\log\log r$ appeared in \eqref{Equ:explosion-cond} can be dropped (see  \cite[Lemma 3.1]{HQ10} for a proof). 
\end{remark}

Before proceeding to the proof of Theorem \ref{Theoem_rate}, 
let us explain the idea of the proof.

Let $\{R_n\}_{n\geq 1}$ be a sequence of strictly increasing radii to be determined later such that $R_n\uparrow +\infty$ as $n\uparrow +\infty$ and set $B_n:=B(R_n)$ for every $n\geq 1$. 
Suppose that the process starts from points in $B_1$. Consider  the first exit time of $X_t$ from $B_n$:
\[
   \tau_n:=\inf\{t>0\colon X_t\notin B_n\},\quad n\geq 1.
\]
For every $n\geq 1$, it is clear that at the stopping time $\tau_n$, the process first reaches the boundary $\partial B_n$. Let $\tau_0=0$. Then for every $n\geq 1$,  the difference $\tau_n-\tau_{n-1}$, provided that $\tau_{n-1}<+\infty$ almost surely, is the crossing time of the process from $\partial B_{n-1}$ to $\partial B_n$.

Suppose that we have a sequence of time steps $\{t_n\}_{n\geq 1}$ such that
\[
      \sum_{n=1}^\infty\P_x (\tau_n-\tau_{n-1} \leq t_n)<\infty.
\]
Then by the Borel-Cantelli's lemma, the probability that the events $\{\tau_n-\tau_{n-1} \leq t_n\}$ happen infinitely often is 0. So, for all large enough $n$ we have
\[
  \tau_n-\tau_{n-1} > t_n,\quad \P_x\textrm{-}\mathrm{a.s.}.
\]
Roughly speaking,  it implies 
\[
    T_n:=\sum_{k=1}^n t_k<\tau_n, \quad \P_x\textrm{-}\mathrm{a.s.}. 
\]
It indicates that almost surely the process stays in $B_n$ before time $T_n$. This connection will give us an upper rate function for $X_t$ after some manipulation.  

Now the problem is reduced to estimates of the crossing times $\tau_n-\tau_{n-1}$.  To get the estimates, analytic and probabilistic approaches  are used in \cite{GH09} and \cite{HQ10} respectively. The main idea in \cite{HQ10} is to use Lyons-Zheng's decomposition (\cite{LZ88}). This goes back to \cite{Tak89,Tak91} where conservativeness of general symmetric diffusion process was studied. Note that some additional geometric condition is required by the analytic approach in \cite{GH09}. 

In this paper we adopt the procedure in \cite{HQ10}. Let us first define some notation. For any compact set $K\subset E$,  set
\begin{equation}\label{Def:equ:P_K}
   \P_K=\frac{1}{\m(K)}\int_K \P_z \,\m(dz)
\end{equation}
and
\[
   \EC^K(f,g)=\int_K \Gamma(f,g)\,d\m,\quad f,g\in\F.
\]
It is clear that $\EC^K$ is closed on $L^2(X,\Eins_K \m)$. Let $\F^K$ be the domain of the closure of $\EC^K$. Then $(\EC^K,\F^K)$ is a strongly local,  regular, symmetric and conservative Dirichlet form on $L^2(K,\Eins_K \m)$. 

As remarked in \cite{HQ10},  to use the Lyons-Zheng's decomposition, we have to consider the following events
\[
   A_n:=\{\tau_n-\tau_{n-1}\leq t_n,\ \tau_n\leq T_n\},\quad n\geq 1
\] 
instead of the events $\{\tau_n-\tau_{n-1}\leq t_n\}$, $n\geq 1$.

The estimate in the following lemma is crucial for the proof of Theorem \ref{Theoem_rate}. 

\begin{lemma}\label{lemma:crosstime}
Let $R_0=0$ and $r_n:=R_n-R_{n-1}$ for every $n\geq 1$. 
Then for every $n\geq 1$, 
\begin{equation}\label{Equ:P_B1_A_n-estimate} 
\P_{B_1}(A_n)\leq \frac{16}{\sqrt{2\pi}} \frac{\m(B_n)}{\m(B_1)}\frac{T_n\sqrt{\lambda(R_n)}}{\sqrt{t_n}r_n}\exp\left(-\frac{r_n^2}{8\lambda(R_n) t_n}\right).
\end{equation}
\end{lemma}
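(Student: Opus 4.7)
The plan combines the Lyons--Zheng forward--backward martingale decomposition with a time-discretization of the fast-crossing event $A_n$, following the scheme of \cite{Tak89,Tak91,HQ10}. As a first reduction, since $B_1 \subset B_n$, the definition \eqref{Def:equ:P_K} gives
\[
\P_{B_1}(A_n) \leq \frac{\m(B_n)}{\m(B_1)}\,\P_{B_n}(A_n),
\]
producing the measure ratio in \eqref{Equ:P_B1_A_n-estimate}. Since on $A_n$ the trajectory is confined to $B_n$ throughout $[0,T_n]$, I can replace $X$ by the conservative reflected diffusion $Y$ associated with $(\EC^{B_n},\F^{B_n})$; the normalized measure $\Eins_{B_n}\m/\m(B_n)$ is invariant and reversing for $Y$, so $Y$ is stationary and reversible under $\P_{B_n}$.

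The discretization exploits that on $A_n$ the function $\rho(Y_t)$ increases by at least $r_n$ over a subinterval of $[0,T_n]$ of length at most $t_n$. Sampling on the grid $\{k t_n\}_{k=0}^{\lfloor T_n/t_n\rfloor}$ and splitting into the cases ``crossing inside one panel'' and ``crossing straddling two consecutive panels'' shows that at least one sampled increment $|\rho(Y_{(k+1)t_n}) - \rho(Y_{k t_n})|$ must be at least $r_n/2$. A union bound together with stationarity of $Y$ under $\P_{B_n}$ then gives
\[
\P_{B_n}(A_n) \leq \frac{T_n}{t_n}\,\P_{B_n}\bigl(|\rho(Y_{t_n}) - \rho(Y_0)| \geq r_n/2\bigr),
\]
with the rounding absorbed into the final constant. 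To bound this single-panel tail I would apply Lyons--Zheng to the truncation $f = \rho \wedge R_n \in \F^{B_n}$:
\[
f(Y_{t_n}) - f(Y_0) = \tfrac12 M_{t_n}^{[f]} - \tfrac12 \widetilde{M}_{t_n}^{[f]},
\]
where $M^{[f]}$ is the forward martingale additive functional, whose sharp bracket satisfies $d\langle M^{[f]}\rangle_s \leq \lambda(R_n)\,ds$ on $B_n$, and $\widetilde{M}^{[f]}$ is its time-reversed counterpart. Reversibility of $Y$ under $\P_{B_n}$ makes $M_{t_n}^{[f]}$ and $\widetilde{M}_{t_n}^{[f]}$ identically distributed, so the tail of $f(Y_{t_n}) - f(Y_0)$ is dominated by twice the tail of $|M_{t_n}^{[f]}|$. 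Dubins--Schwarz together with the Brownian maximal bound $\P(\sup_{s \leq T}|\beta_s| \geq a) \leq (4\sqrt{T}/(\sqrt{2\pi}\,a))\exp(-a^2/(2T))$, applied with $T = \lambda(R_n) t_n$ and $a = r_n/2$, then produces the Gaussian exponent $-r_n^2/(8\lambda(R_n) t_n)$ and, after multiplying all three reductions together, the explicit constant $16/\sqrt{2\pi}$.

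The main obstacle is the clean application of Lyons--Zheng to $f = \rho \wedge R_n$ as an element of the part Dirichlet space $(\EC^{B_n},\F^{B_n})$, with its energy density still controlled by $\lambda(R_n)$: this relies on the locality of the energy measure, the assumption $\rho \in \F_\loc \cap \C(E)$, and the compactness of $B_n$. A secondary point is that one must invoke the sharp Gaussian-type maximal tail (with the prefactor $\sqrt{T}/a$), not merely the crude exponential bound from exponential-martingale arguments, in order to recover the explicit polynomial prefactor $T_n \sqrt{\lambda(R_n)}/(\sqrt{t_n}\,r_n)$ and the constant $16/\sqrt{2\pi}$ in \eqref{Equ:P_B1_A_n-estimate}.
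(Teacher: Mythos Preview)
Your overall architecture is right and matches the paper's: pass from $\P_{B_1}$ to the stationary measure $\P_{B_n}$, switch to the conservative process on $B_n$, discretize in time, apply Lyons--Zheng, and finish with Dubins--Schwarz plus the Gaussian maximal bound. The use of stationarity to reduce all panels to a single one is a pleasant streamlining of the paper's argument, which instead carries the Lyons--Zheng decomposition on the full interval $[0,T_n]$ and sums over $k$.

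However, the discretization step as you state it is false. You claim that on $A_n$ some \emph{endpoint} increment $|\rho(Y_{(k+1)t_n})-\rho(Y_{kt_n})|$ must be at least $r_n/2$. This need not hold: nothing prevents the reflected process from reaching $R_n$ at time $\tau_n\in(kt_n,(k+1)t_n]$ and then dropping back below $R_{n-1}+r_n/2$ by time $(k+1)t_n$, so every grid increment can be arbitrarily small. What is true, and what the paper uses in \eqref{eq2:pf:lemma:crosstime}, is that the \emph{supremum} over a window of width $2t_n$ around some grid point must be large:
\[
A_n\subset\bigcup_{k}\Bigl\{\sup_{|s|\le t_n}\bigl|\rho(Y_{kt_n+s})-\rho(Y_{kt_n})\bigr|\ge r_n/2\Bigr\}.
\]
This is the point where the crossing interval $[\tau_{n-1},\tau_n]$ of length $\le t_n$ is captured by some grid-centered window.

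The fix is straightforward and dovetails with what you already have. Replace the single-panel probability by $\P_{B_n}\bigl(\sup_{0\le s\le t_n}|\rho(Y_s)-\rho(Y_0)|\ge r_n/2\bigr)$ (or the two-sided version, at the cost of an extra factor $2$). Then the Lyons--Zheng identity on $[0,t_n]$ gives
\[
\rho(Y_s)-\rho(Y_0)=\tfrac12 M_s-\tfrac12\bigl(\widetilde M_{t_n}-\widetilde M_{t_n-s}\bigr),
\]
so the supremum is controlled by $\tfrac12\sup_s|M_s|+\tfrac12\sup_s|\widetilde M_{t_n}-\widetilde M_{t_n-s}|$; reversibility makes the two pieces identically distributed, and Dubins--Schwarz with the reflection-principle bound $\P(\sup_{u\le T}|\beta_u|\ge a)\le 4\P(\beta_T\ge a)$ yields the Gaussian tail with exponent $-r_n^2/(8\lambda(R_n)t_n)$. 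Note that you really do need the maximal bound here, not merely for the random time-change (as your text suggests) but because the quantity to be estimated is itself a running supremum. Tracking the factors $2$ (forward/backward), $2$ (two-sided window) and $4$ (reflection) together with $[T_n/t_n]+1$ from the union bound recovers the constant $16/\sqrt{2\pi}$.
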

\begin{proof}
Set $\rho_t:=\rho(X_t)$, $\tau_0=0$ and  $\tau_n:=\inf\{t>0\colon X_t\notin B_n\}$ for every $n\geq 1$. 
For every $r>0$, 
let $\M^r=(X_t,\P_x^r)$  be the diffusion processes corresponding to  $(\EC^{B(r)},\F^{B(r)})$. 

Analogous to \eqref{Def:equ:P_K}, we define for every compact set $K\subset B(r)\subset E$,
 \begin{equation}\label{Def:equ:P_K'}
   \P_K^r=\frac{1}{\m(K)}\int_K \P_z^r \,\m(dz).
\end{equation}
Then
  \begin{equation}\label{eq1:pf:lemma:crosstime}
    \begin{aligned}
    \P_{B_1}(A_n)=   & \P_{B_1}(\tau_n-\tau_{n-1}\leq t_n,\, \tau_n\leq T_n) \\
   \leq& \P_{B_1} \left ( \sup_{0\leq s\leq t_n} (\rho_{\tau_{n-1}+s}-\rho_{\tau_{n-1}})
                         \geq r_n, \, \tau_n\leq T_n \right)  \\
   \leq& \frac{\m(B_{n-1})}{\m(B_1)}
			\P_{B_{n-1}} \left( \sup_{0\leq s\leq t_n} (\rho_{\tau_{n-1}+s}-	\rho_{\tau_{n-1}})
                         \geq r_n, \, \tau_n\leq T_n\right)  \\
%
\leq & \frac{\m(B_{n-1})}{\m(B_1)}
			\P_{B_{n-1}}^{R_{n}} \left( \sup_{0\leq s\leq t_n} (\rho_{\tau_{n-1}+s}-	\rho_{\tau_{n-1}})
                         \geq r_n, \, \tau_n\leq T_n\right)  \\
\leq & \frac{\m(B_n)}{\m(B_1)}
			\P_{B_n}^{R_n} \left( \sup_{0\leq s\leq t_n} (\rho_{\tau_{n-1}+s}-\rho_{\tau_{n-1}})
                         \geq r_n, \, \tau_n\leq T_n\right) .
    \end{aligned}
  \end{equation}
Since the diffusion process $\M^n$ is conservative, by Lyons-Zheng's decomposition (see \cite{LZ88}) we have
\begin{equation}\label{LZ-Decomp}
   \rho_t-\rho_0 =\frac12 M_t - \frac12(\tilde{M}_{T_n} - \tilde{M}_{T_n-t}),\quad
                   \P_{B_n}^{R_n}\textrm{-}\mathrm{a.s.} .
\end{equation}
Here $M_t$ is a martingale additive functional of finite energy (see the notation in \cite{FOT94}) and 
$$\tilde{M}_t=M_t(r_{T_n}),$$ where $r_{T_n}$ is the time reverse operator at $T_n$ defined via 
   $$X_t(r_{T_n})=X_{T_n-t}.$$
Set 
$$\F_t=\sigma(X_s\colon 0\leq s\leq t)
\quad \mathrm{and} 
\quad \G_t=\sigma(X_s\colon T_n-t\leq s\leq T_n).$$ 
Then $(M_t,\P_{B_n}^{R_n})$ is a $\F_t$-martingale, while $(\tilde{M}_t,\P_{B_n}^{R_n})$ is a  $\G_t$-martingale. 

As discussed in \cite{HQ10}, by \eqref{LZ-Decomp} we have
\begin{equation}\label{eq2:pf:lemma:crosstime}
\begin{aligned}
       & \left\{ \sup_{0\leq s\leq t_n} (\rho_{\tau_{n-1}+s}-\rho_{\tau_{n-1}} ) \geq r_n,\tau_n\leq T_n
            \right\} \\
\subseteqq & \bigcup_{k=1}^{\left[\frac{T_n}{t_n}\right]+1} 
            \left\{ \sup_{|s|\leq t_n} |\rho_{kt_{n}+s}-\rho_{kt_{n}} | 
                \geq \frac{r_n}{2} \right\} \\
\subseteqq &  \bigcup_{k=1}^{\left[\frac{T_n}{t_n}\right]+1} 
            \left(
            \left\{ \sup_{|s|\leq t_n} |M_{kt_{n}+s}-M_{kt_{n}} | 
                \geq \frac{r_n}{2} \right\}                 
            \bigcup 
                  \left\{ \sup_{|s|\leq t_n} |\tilde{M}_{kt_{n}+s}-\tilde{M}_{kt_{n}} | 
                \geq \frac{r_n}{2} \right\}    \right)
\end{aligned}
\end{equation}

From the symmetry of $M_t$, we have the time reversibility: For any $\F_{T_n}$-measurable function $F$, 
	$$\E_{B_n}^{R_n}(F(r_{T_n}))=\E_{B_n}^{R_n}(F).$$ 
Here 	$\E_{B_n}^{R_n}$ is the expectation with respect to $\P_{B_n}^{R_n}$.
Hence for every $1\leq k\leq \left[\frac{T_n}{t_n}\right]+1$, 
\begin{equation}\label{Equ:same-prob-M-M-rt}
\begin{aligned}
   &\P_{B_n}^{R_n} \left( \sup_{|s|\leq t_n} |M_{kt_{n}+s}-M_{kt_{n}} | 
                \geq \frac{r_n}{2}  \right) \\
=& \P_{B_n}^{R_n}         
\left ( \sup_{|s|\leq t_n} |\tilde{M}_{kt_{n}+s}-\tilde{M}_{kt_{n}} | 
                \geq \frac{r_n}{2} \right).        
\end{aligned}                
\end{equation}
Therefore, 
it follows from \eqref{eq1:pf:lemma:crosstime},
\eqref{eq2:pf:lemma:crosstime} and
\eqref{Equ:same-prob-M-M-rt} that 
\begin{equation}\label{Equ:P_B1}
\begin{aligned}
     & \P_{B_1}(\tau_n-\tau_{n-1}\leq t_n,\, \tau_n\leq T_n) \\
\leq &  2 \frac{\m(B_n)}{\m(B_1)} 
\sum_{k=1}^{\left[\frac{T_n}{t_n}\right]+1}
            \P_{B_n}^{R_n} \left( \sup_{|s|\leq t_n} |M_{kt_{n}+s}-M_{kt_{n}} | 
                \geq \frac{r_n}{2}  \right).
\end{aligned}
\end{equation}
So it remains to estimate the probability 
$$\P_{B_n}^{R_n} \left( \sup_{|s|\leq t_n} |M_{kt_{n}+s}-M_{kt_{n}} | 
                \geq \frac{r_n}{2}  \right)$$ for every $1\leq k\leq \left[\frac{T_n}{t_n}\right]+1$. 
                
 Note that continuous martingale $M_t-M_0$ is a time change of one dimensional standard Brownian motion $B(t)$ with respect to $\P_{B_n}^{R_n}$. That is, for every $t\geq 0$, 
\[
	M_t-M_0=B(\<M\>_t)=B\left(\int_0^t\Gamma(\rho,\rho)(X_u)\,du\right).
\]
Since $\Gamma(\rho,\rho)\leq \lambda(R_n)$ on $B_n$,  we have
for every $1\leq k\leq \left[\frac{T_n}{t_n}\right]+1$,
\begin{equation}\label{Equ:estimage_P_Bn-M-M}
\begin{aligned}
   & \P_{B_n}^{R_n} \left( \sup_{|s|\leq t_n} |M_{kt_{n}+s}-M_{kt_{n}} | 
                \geq \frac{r_n}{2}  \right) \\
= &   \P_{B_n}^{R_n}    
     \left(   
	 \sup_{|s|\leq t_n} 	 
	 \left|B\left(\int_{kt_{n}}^{kt_{n}+s}
	                    \Gamma(\rho,\rho)(X_u)\,du
	           \right)\right|             
                           \geq \frac{r_n}{2}
      \right) \\      
 \leq  & 2 \P_{B_n}^{R_n}     \left(   
         \sup_{0\leq s \leq t_n} 	 
	         \left|B(\lambda(R_n) s)\right|             
                           \geq \frac{r_n}{2}
      \right)   \\
\leq & 4  \P_{B_n}^{R_n} \left( \frac{1}{\sqrt{\lambda(R_n) t_n}} B( \lambda(R_n) t_n)\geq \frac{r_n}{2\sqrt{\lambda(R_n) t_n}} \right) \\
= & 4\frac{1}{\sqrt{2\pi}} \int_{\frac{r_n}{2\sqrt{\lambda(R_n) t_n}}}^\infty \exp\left(-\frac{x^2}{2}\right) dx \\
\leq & \frac{8}{\sqrt{2\pi}} \frac{\sqrt{\lambda(R_n) t_n}}{r_n} \exp\left(-\frac{r_n^2}{8\lambda(R_n) t_n}\right).         
\end{aligned}                
\end{equation}
Here we have used the following simple inequality 
\[
	\int_a^\infty \exp\left(-\frac{x^2}{2}\right) dx \leq \frac1a 
	\exp\left(-\frac{a^2}{2}\right),\quad a>0.
\]
Now \eqref{Equ:P_B1_A_n-estimate} follows immediately from 
 \eqref{Equ:P_B1} and \eqref{Equ:estimage_P_Bn-M-M}.
Thus the proof is complete. 
\end{proof}

Now let us consider how to choose $R_n$ and $t_n$ properly. 

By $\sqrt{x}<\e^x$ for all $x>0$, we have 
\[
\frac{1}{\sqrt{t_n}}\leq \frac{\sqrt{16\lambda(R_n)}}{r_n}
\exp\left(
		\frac{r_n^2}{16\lambda(R_n)t_n}
		\right).
\]
So it follows from 
\eqref{Equ:P_B1_A_n-estimate}
that 
\begin{equation}\label{Equ:P_B1_A_n-estimate-new1} 
\P_{B_1}(A_n)\leq \frac{64}{\sqrt{2\pi}} \frac{1}{\m(B_1)}\frac{T_n\lambda(R_n) }{r_n^2}\exp\left(\log \m(B_n)-\frac{r_n^2}{16\lambda(R_n) t_n}\right).
\end{equation}
In order to absorb the coefficient of the exponential part, it is natural to take 
\begin{equation*}
    t_n=\frac{r_n^2}{32\lambda(R_n)(\log \m(B_n)+h( R_n))}, 
\end{equation*}
where  $h$ is an increasing function to be  determined later.
By \eqref{Equ:P_B1_A_n-estimate-new1}
we get 
\begin{equation}\label{Equ:P_B1_A_n-estimate-new2} 
\P_{B_1}(A_n)\leq \frac{64}{\sqrt{2\pi}} \frac{1}{\m(B_1)}\frac{T_n\lambda(R_n) }{r_n^2}\exp\left(-\log \m(B_n)-2h(R_n)\right).
\end{equation}
Suppose that $r_n$ is increasing in $n$. 
Then 
\[
\begin{aligned}
	T_n=\sum_{k=1}^{n}t_k 
	 = &
		\frac{1}{32\lambda(R_n)\left(\log \m(B_n)+h( R_n)\right)}
		\sum_{k=1}^{n}r_k^2  \\
	 \leq &
		\frac{r_n}{32\lambda(R_n)\left(\log \m(B_n)+h( R_n)\right)}
		\sum_{k=1}^{n}r_k \\
	= & \frac{R_n r_n}{32\lambda(R_n)\left(\log \m(B_n)+h( R_n)\right)}	   
		   .
\end{aligned}	
\]
Substituting the above inequality into \eqref{Equ:P_B1_A_n-estimate-new2}, we get 
\begin{equation}\label{Equ:P_B1_A_n-estimate-new3} 
\P_{B_1}(A_n)\leq 
\frac{2}{\sqrt{2\pi}} \frac{1}{\m(B_1)}
\frac{1}{\log \m(B_n)+h( R_n)}
 \frac{R_n}{r_n}\exp\left(-2h(R_n)\right).
\end{equation}
To obtain  $\sum_{n=1}^\infty \P_{B_1}(A_n)<\infty$ 
it suffices to take the Radii $R_n$ and the function $h$ such that 
\begin{equation}\label{Equ:aim-est2:sum_R/r}
	\sum_{n=1}^\infty \frac{R_n}{r_n}\exp\left(-2h(R_n)\right)<\infty.
\end{equation}
Let $R_n=2^{n}c$ for some large enough constant $c>0$ such that $x\in B_1$. Since $R_n/r_n=2$,  \eqref{Equ:aim-est2:sum_R/r} is equivalent to 
\[
		\sum_{n=1}^\infty \exp\left(-2h(2^{n}c)\right)<\infty. 
\]
Evidently it is sufficient to take $h(r)=\log\log r$ since in this case we have 
$$\exp\left(-2h(2^n c)\right)=(n\log 2+\log c)^{-2}\approx \frac{1}{n^2}.$$ 

We are now in a position to prove Theorem  \ref{Theoem_rate}.

\begin{proof}[Proof of Theorem \ref{Theoem_rate}]
According the discussion above, by taking $R_n=2^n c$ for some large enough constant $c>0$ and
\begin{equation}\label{Equ:t_n-def}
    t_n=\frac{r_n^2}{32\lambda(R_n)(\log \m(B_n)+\log\log R_n)},
\end{equation}
we get $\sum_{n=1}^\infty\P_{B_1}(A_n)<\infty$. As shown in \cite[Lemma 2.1]{HQ10}, there exists a constant $T_{-1}\geq 0$ such that for all $n\geq 1$, 
$$\tau_n\geq T_n-T_{-1},\quad \P_{B_1}\textrm{-}\mathrm{a.s.}.$$
Hence 
\begin{equation}\label{Ineq:sup_rho_t-leq-2^n1} 
	\sup_{t\leq T_n-T_{-1}} \rho_t\leq 2^n c, \quad \P_{B_1}\textrm{-}\mathrm{a.s.}.
\end{equation}
Note that 
\[
	r_n^2=
	\frac14 R_n(R_{n+1}-R_n).
\]
By \eqref{Equ:t_n-def} we have 
\begin{equation}\label{Equ:Tn_control}
\begin{aligned}
	T_n
	  &=\sum_{k=1}^n t_k=
	  \sum_{k=1}^n
  	\frac{R_k(R_{k+1}-R_k)}{128\lambda(R_k)(\log \m(B_k)+\log\log R_k)}
	 \\
	&\geq \frac1{256}\int_{R_1}^{R_{n+1}}
	   \frac{r}{\lambda(r)(\log \m(r)+\log\log r)}dr
	.
\end{aligned}	
\end{equation}
Let
\begin{equation}\label{volume_condition-invers}
 \phi(R)=\int_{2c}^{R}
         \frac{r}{\lambda(r)\left(\log \m(B(r))+\log\log r\right)}
  \,dr. 
\end{equation}
Then \eqref{Equ:Tn_control} implies  
\begin{equation}\label{Equ:T_n-geq-1/156-phi}
T_n\geq \frac1{256} \phi(2^{n+1}c).
\end{equation}
For every $R\geq 2c$, let $n(R)$ 
be the positive integer 
such that 
\begin{equation}\label{Equ:n_r-c}
2^{n(R)}c< R\leq 2^{n(R)+1}c.
\end{equation}
As $\phi$ is increasing, by \eqref{Equ:T_n-geq-1/156-phi} and
\eqref{Equ:n_r-c}
 we have  
\[\frac1{256}\phi(R)\leq  \frac1{256} \phi(2^{n(R)+1}c) \leq T_{n(R)}.\]
Therefore, by  \eqref{Ineq:sup_rho_t-leq-2^n1} and \eqref{Equ:n_r-c},  for all $R\geq 2c$
\begin{equation}\label{Ineq:sup_rho_t-leq-2^n2} 
	\sup_{t\leq \frac1{256}\phi(R)-T_{-1}} \rho_t\leq 2^{n(R)} c<R,
	 \quad \P_{B_1}\textrm{-}\mathrm{a.s.}.
\end{equation}
Let $\tilde{\psi}$ be the inverse function of $\phi$. Then 
\eqref{Ineq:sup_rho_t-leq-2^n2} implies 
\begin{equation}\label{Ineq:sup_rho_t-leq-2^n3} 
	\sup_{t\leq T} \rho_t\leq \tilde{\psi}(256(T+T_{-1})), 
	\quad \P_{B_1}\textrm{-}\mathrm{a.s.}
\end{equation}
for large enough $T>0$. 
Consequently, in terms of $\psi$, we have for sufficiently large $T>0$, 
\begin{equation}\label{Ineq:sup_rho_t-leq-2^n4} 
\sup_{t\leq T} \rho_t\leq \psi(512T) ,
\quad \P_{B_1}\textrm{-}\mathrm{a.s.}.
\end{equation}

Now we arrive at the conclusion that 
$$\P_{B_1}(H)=1,$$
 where 
\[
   H=\{\rho(X_t)\leq \psi(Ct)\ \mathrm{for\ all}\ t\ \mathrm{sufficiently\  large}\}.   
\]
This means that the function $\psi$ is an upper rate function for $X_t$ starting from points with uniform distribution on the ball $B_1$. The theorem will proved by showing  that $\psi$ is also an upper rate function for $X_t$ starting from every single point in $B_1$. 

By Markov property of $X_t$, we have for every $z\in E$, 
\begin{equation}\label{Equ:Markov-property-1H}
	\E_z(\Eins_H\circ \theta_t|\F_t)=\E_{X_t}\Eins_H,
\end{equation}
where $\theta_t$ is the shift operator on $\PP(E)$ defined by 
$$(\theta_t\omega)(s)=\omega(t+s),\quad \omega\in \PP(E).$$
 Obviously we have  $\Eins_H\circ \theta_t=\Eins_H$. Hence from \eqref{Equ:Markov-property-1H} we get 
\begin{equation}\label{Equ:Ez-EX_t}
	\E_z(\Eins_H|\F_t)=\E_{X_t}\Eins_H.
\end{equation}
Let \[h(z):=\P_z(H)=\E_z\Eins_H\] for all $z\in E$. 
By \eqref{Equ:Ez-EX_t} and the definition of $h$, we obtain 
\[
	P_th(z)=\E_z h(X_t) = \E_z(\E_{X_t}\Eins_H)
	           =\E_z(\E_z(\Eins_H|\F_t)).
\]
Note that $\E_z(\E_z(\Eins_H|\F_t))
	           =\E_z\Eins_H =h(z)$. So we have 
$$P_th(z)=h(z)$$ for all $z\in E$.  	           
This proves that $h$ is a $L$-harmonic function on $E$ (i.e. solution of $Lu=0$, cf.  \cite{Stu94_JRAM}), where $L$ is the infinitesimal generator associated with $X_t$. 
By Liouville theorem (cf. \cite{Stu94_JRAM}), the function $h$ is constant on $B_1$. On the other hand, we have 
\[
      \P_{B_1}(H)=\frac{1}{\m(B_1)}\int_{B_1} \P_z(H) \,d\m
      =\frac{1}{\mu(B_1)}\int_{B_1} h(z)\,\m(dz)=
           1. 
\]
Therefore we must have $\P_z(H)=h(z)=1$ for  every $z\in B_1$. In particular, we get \eqref{Equ:rate_fnt_psi} and hence the proof is now complete. 
\end{proof}

\begin{proof}[Proof of Corollary \ref{Coro-explosion}]
From \eqref{Equ:rate_fnt_psi}, we have for 
any $x\in E$, there is a constant $C>0$ such that 
for all sufficiently large $T>0$, 
$$
	\sup_{t\leq T} \rho_t\leq \psi(CT), \quad \P_x\textrm{-}\mathrm{a.s.}.
$$
Hence we have for any sufficiently large $T>0$, 
\[
	\P_x(T<\zeta)=\P_x\left(\sup_{t\leq T} \rho_t<\infty\right)=1.
\]
 This proves that the process is conservative. 
\end{proof}

 
 \section{Examples of escape rate of Dirichlet process}
 \label{Sec:examples_Dirichlet}
\begin{example}\label{Classical-BM}
Let  $(M,g)$ be a $n$-dimensional complete connected Riemannian manifold with Riemannian metric $g$. Let $\varDelta$ and $\nabla$ be the Laplace-Beltrami operator and  gradient operator on $M$ respectively.  Denote the inner product in the tangent space $T_x M$ at $x\in M$ by $\<\cdot,\cdot\>:=\<\cdot,\cdot\>_x:=g_x(\cdot,\cdot)$. Let $d$ be the Riemannian distance function on $M$ and $\rho_o(\cdot)=d(o,\cdot)$ the distance function on $M$ with respect to some fixed point $o\in M$. Let $B(r):=\{x\in M\colon \rho_o(x)\leq r\}$ be the ball with center $o$ and radius $r> 0$. Let $\mathrm{vol}(dx)$ be the volume element of the manifold $M$, and  $TM$  the bundle of tangent space of $M$. 

Let $A\colon TM\to TM$ be a strictly positive definite mapping and $V$ a smooth function on $M$. Set  $\m(dx)=\exp(V(x))\mathrm{vol}(dx)$.
Due to the integration by parts formula, 
\[
   \EC(f,g)=\int_M \<A\nabla f,\nabla g\>\,\mu(dx),\quad f,g\in \C_0^1(M),
\]
defines a closable Markovian form on $L^2(M,\mu(dx))$. Its closure $(\EC,\F)$  is a strongly local,  regular, conservative and symmetric Dirichlet form on $L^2(M,\mu(dx))$. 

In the case when $A$ is the identity operator on $TM$ and $V$ vanishes, $(\EC,\F)$ is the classical Dirichlet form. The associated intrinsic metric coincides with the Riemannian distance and Assumption \ref{Assumption_rho} holds for $\rho_o$.  The corresponding diffusion process is the Brownian motion $B_t$ on $M$. By Corollary \ref{Coro-explosion}, there exists some constant $C>0$ such that $\psi(Ct)$ is an upper rate function for $B_t$ with respect to $\rho_o$, where $\psi$ is given by 
 \begin{equation}
  t=\int_{2}^{\psi(t)}
         \frac{r}{\log \mathrm{vol}(B(r))+\log\log r}
  \,dr. 
\end{equation}
This shows that Corollary \ref{Coro-explosion} covers \cite[Theorem 4.1]{HQ10}.

In the case when $A$ is the identity operator and $V\neq 0$, the associated infinitesimal generator of the Dirichlet form is given by the diffusion operator $L=\varDelta+\nabla V$ on $\C_0^\infty(M)$. The intrinsic metric is still the Riemannian distance. Hence,  there is some constant $C>0$ such that $\psi(Ct)$ is an upper rate function with respect to $\rho_o$ for the associated diffusion process, where $\psi$ is given by 
\begin{equation}
  t=\int_{2}^{\psi(t)}
         \frac{r}{\log \m(B(r))+\log\log r}
  \,dr. 
\end{equation}

For the general case, let 
\[
            \lambda(r):=\sup_{y\in B(r)} \<A \nabla \rho_o,\nabla \rho_o\>(y)  .
\]
By Theorem \ref{Theoem_rate}, there is a constant $C>0$ such that $\psi(Ct)$ is an upper rate function with respect to $\rho_o$, where $\psi(t)$ is given by 
\begin{equation}
  t=\int_{2}^{\psi(t)}
         \frac{r}{\lambda(r)\left(\log \m(B(r))+\log\log r\right)}
  \,dr. 
\end{equation}

In particular, if we have 
\[
	\<A\nabla \rho_o,\nabla \rho_o\>\leq c \rho_o^\gamma
\]
for some positive constants $c$ and $\gamma$, then there is a constant $C>0$ such that  $\psi(Ct)$ is an upper rate function, where $\psi$ is given by 
\begin{equation}
  t=\int_{2}^{\psi(t)}
         \frac{r^{1-\gamma}}{\log \m(B(r))+\log\log r}
  \,dr. 
\end{equation}
\end{example} 

\begin{example} 
Let $U=\{x\in\R^n\colon |x|<l\}$ be the Euclidean ball with center the origin and  radius $l>0$.  Let $\Xi,\Phi$ be continuous positive functions on $[0,\infty)$. Set 
\[
	\xi(x)=\Xi(|x|),\quad \phi(x)=\Phi(|x|),\quad x\in U
\]
and $d\m(x)=\phi^2\xi^2dx$. Consider the Markovian form 
$(\EC,\C_0^\infty(U))$ with 
\[
	\EC(f,g)=\int_U \<\nabla f,\nabla g\> \xi^{-2}
			\,d\m	
\]
on the Hilbert space $L^2(U,d\m)$. 

The intrinsic metric $d$ is given by 
$$d(x,0)=\int_0^{|x|} \Xi(s)ds,\quad x\in U.$$
Suppose that 
$$\int_0^{l} \Xi(s)ds=\infty.$$
Clearly the form $(\EC,\C_0^\infty(U))$ is closable and Assumption \ref{Top_Assumption} holds. 
Hence $(U,d)$ is complete. The volume of the ball 
$$B(0,r)=\{x\in U\colon d(x,0)\leq r\}
           =\left\{x\in U\colon \int_0^{|x|}\Xi(s)\,ds\leq r\right\}$$ 
with center  $0$ and radius $r>0$ is given by 
\[
	\m(B(0,r))=\int_{B(0,r)}\,\phi^2\xi^2dx 
	      =\mathrm{vol}(\SS^{n-1}) 
	      	\int_0^{s^{*}(r)}
		 \Phi^2(s)\Xi^2(s)s^{n-1}\,ds,
\]
where $s^{*}(r)$ is determined by 
$$\int_0^{s^{*}(r)}\Xi(s)\,ds=r, 
$$
and 
$\mathrm{vol}(\SS^{n-1}) $ is the volume of the standard sphere 
$\SS^{n-1}:=\{x\in \R^n\colon
		|x|=1
 \}$.

Suppose that 
$l=+\infty$ (i.e. $U=\R^n$) and 
$$\Phi(s)  \approx s^a,\quad \Xi(s)  \approx s^b$$
with $a,b>0$. 
Here for any two functions $f$ and $g$,  $f \approx g$ means that there exists a constants $C(f,g)>0$ such that 
\[\frac{1}{C(f,g)}g\leq f\leq C(f,g)g. \]
Then for any $x\in U$ we have 
\[
	d(x,0)=\int_0^{|x|}\Xi(s)ds\approx |x|^{1+b}. 	
\]
Hence 
\[s^*(r)\approx r^{\frac{1}{1+b}}.\]
So 
\[
	\m(B(0,r))\approx \int_0^{s^*(r)} s^{2(a+b)+n-1}ds
	\approx (s^*(r))^{2(a+b)+n}\approx r^{\frac{2(a+b)+n}{1+b}}. 
\]
Therefore, 
by Theorem \ref{Theoem_rate}, 
for any $x\in\R^n$, 
there exists some constant $C>0$ such that 
\[
   \P_x(d(X_t,0)\leq C \sqrt{t\log t} \ \mathrm{for\ all\ }t\ \mathrm{sufficiently\ large})=1.
\]
In terms of the Euclidean metric, it implies that there exists some constant $C_1>0$ such that 
\[
   \P_x(|X_t|\leq C_1 (t\log t)^{\frac1{2(1+b)}} \ \mathrm{for\ all\ }t\ \mathrm{sufficiently\ large})=1.
\]
\end{example}

\begin{example}\label{Example:Diri-Form-Rn}
Let $\EC$ be a symmetric bilinear form on $L^2(\R^n,dx)$ defined by 
\[
   \EC(f,g)=\sum_{i,j=1}^n\int_{\R^n} 
      a_{ij}(x)\frac{\partial f}{\partial x_i}
	       \frac{\partial g}{\partial x_i}	dx,\quad f,g\in \C_0^\infty(\R^n),
\]
where for each $1\leq i,j\leq n$, $a_{ij}$ is a locally integrable measurable function on $\R^n$ such that 
$(a_{ij})_{n\times n}$ is symmetric, locally uniformlly elliptic and for all $\xi\in\R^n$
\begin{equation}\label{ax_cond_form_on_Rn}
 	\sum_{i,j=1}^n  a_{ij}(x) \xi_i\xi_j 
		         \approx a(x)\|\xi\|^2
\end{equation}
for some positive  continuous function $a(x)$ on $\R^n$.

It is well known that $(\EC,\C_0^\infty(\R^n))$ is closable (see \cite[Section 3.1]{FOT94}) and its closure $(\EC,\F)$  is a strongly local,  regular, conservative and symmetric Dirichlet form on $L^2(\R^n,dx)$.

Define a distance function $d$ on $\R^{n}\times \R^n$ by 
\begin{equation*}
\begin{aligned}
	d(x,y)=\inf\left\{
		\int_0^1 a^{-1/2}(\gamma(s))|\gamma'(s)|ds\colon 
		\gamma\in \C^1([0,1];\R^n),\ \gamma(0)=x,\ \gamma(1)=y
	\right\}
\end{aligned}
\end{equation*}
for all $x,y\in \R^n$. 
Essentially $d$ is proportional to the intrinsic metric. 
For every $x\in\R^n$, 
let $\rho_x(y)=d(x,y)$ for all $y\in \R^n$. Set for all $r>0$, 
$B_{\rho_x}(r):=\{y\in \R^n\colon  \rho_x(y)\leq r \}$. 
It is clear that 
$\rho_x$ satisfies Assumption \ref{Assumption_rho}. Moreover, we have for all $r>0$, 
$$\lambda(r)=\sup_{y\in B_{\rho_x}(r)}
	\sum_{i,j=1}^n 
      a_{ij}(y)\frac{\partial \rho_x}{\partial x_i}(y)
	       \frac{\partial \rho_x}{\partial x_i}(y)\approx 1
.$$
By Corollary \ref{cora_of_Theoem_rate}, for any $x\in\R^n$ there is a constant $C>0$ such that 
$\psi(Ct)$ is an upper rate function  with respect to $\rho_x$ for the associated diffusion process $X_t$, where 
$\psi$ is given by
\begin{equation}\label{psi_a-diri}
  t=\int_{2}^{\psi(t)}
         \frac{r}{\log \mathrm{vol}(B_{\rho_x}(r))+\log\log r}
  \,dr. 
\end{equation}
Here $\mathrm{vol}(B)$ is the volume of Borel set $B\subset \R^n$ with respect to the Lebesgue measure.
\end{example}

\begin{example}\label{Example:Diri-Form-Rn-radial-a}
We proceed to consider Example \ref{Example:Diri-Form-Rn}. Suppose that the function $a(x)$
in \eqref{ax_cond_form_on_Rn} 
 is radial, i.e. there exists some strictly positive function $\tilde{a}$ on $[0,\infty)$ such that 
 \[
 	a(x)=\tilde{a}(|x|),\quad x\in\R^n. 
 \]
Then we have 
\[
	\rho_x(y)\approx\tilde{\rho}(|x-y|),\quad x,y\in\R^n, 
\]
where 
\begin{equation}\label{Equ:rho-p-int}
	{\tilde\rho}(s)= \int_0^{s}\frac{1}{\sqrt{\tilde{a}(u)} }\,du,\quad s\in[0,\infty). 
\end{equation}

So there exists some constant $C>0$ such that 
\begin{equation}\label{Equ:rate_fnt_psi-example3-4}
   \P_x(\rho_x(X_t)\leq C\psi(C t)\ \mathrm{for\ all\ }t\ \mathrm{sufficiently\ large})=1, 
\end{equation} 
where by \eqref{psi_a-diri}, $\psi$ can be represented as
\begin{equation}\label{Equ:rate-psi}
  t=\int_{2}^{\psi(t)}
        			   \frac{r}{n \log [\tilde{\rho}^{-1}(r)]+\log\log r}
  \,dr .
\end{equation}
Let 
\begin{equation}\label{equ:tilde-psi-from-psi}
	\tilde{\psi}=\tilde{\rho}^{-1}\circ \psi.
\end{equation}
We have 
\begin{equation}\label{Equ:rate_fnt_tilde_psi}
   \P_x(|X_t|\leq C_1\tilde{\psi}(C_1 t)\ \mathrm{for\ all\ }t\ \mathrm{sufficiently\ large})=1
\end{equation} 
for some constant $C_1>0$.

Let us consider  three special cases of function  $a$ and look for $\psi$ and $\tilde{\psi}$ satisfying  \eqref{Equ:rate_fnt_psi-example3-4}
and \eqref{Equ:rate_fnt_tilde_psi} for some constants $C>0$ and $C_1>0$ respectively.

\textbf{Case 1.}
Suppose that $a\equiv 1$. Then    
  $\tilde{\rho}(s)=s$. So we have
  $\psi(t)=\sqrt{t\log t}$
and $\tilde{\psi}(t)=\sqrt{t\log t}.$

\textbf{Case 2.}
Suppose that 
  \(a(x)=(1+|x|)^{\alpha}\) for some $\alpha<2$. 
Then  $\tilde{\rho}(s)=(1+s)^{1-\alpha/2}$.
So we have  
   $\psi(t)=\sqrt{t\log t}$ and 
    $\tilde{\psi}(t)=(t\log t)^{1/(2-\alpha)}$.

\textbf{Case 3.}
Suppose that \(a(x)=(1+|x|)^2[\log(1+|x|)]^\beta
  \)
  for some $\beta\leq 1$. 
Then  $\tilde{\rho}(s)=[\log(1+s)]^{1-\beta/2}$.
We have: 
  \begin{enumerate}
   \item If $\beta<1$, then we have 
 $\psi(t)=t^{1+\frac{\beta}{2-2\beta}}$ and 
    $\tilde{\psi}(t) = \exp( t^{\frac{1}{1-\beta}})$. 
  \item If $\beta=1$, then 
  we have 
 $\psi(t)= \exp(t)$ and 
    $\tilde{\psi}(t)=\exp(\exp(t)) $. 
  \end{enumerate}
\end{example}

\begin{remark}
In Example \ref{Example:Diri-Form-Rn-radial-a}, if $a(x)=(1+|x|)^\alpha$ with $\alpha>2$ or $a(x)=(1+|x|)^2\log(1+|x|)^\beta$ with $\beta>1$, then the corresponding Dirichlet form is not conservative (see \cite[Example B and Note 6.6]{Dav85}). 
\end{remark}

\begin{remark}
    To get an upper rate function with respect to the Euclidean metric for the process considered in Example \ref{Example:Diri-Form-Rn-radial-a},
usually it is convenient to apply Theorem \ref{Theoem_rate} with $\lambda(r)=\tilde{a}(r)$. That is, one can get  
$\tilde{\psi}$ satisfying \eqref{Equ:rate_fnt_tilde_psi}
by solving 
	\begin{equation}\label{Equ:rate-psi-tilde}
  t=\int_{2}^{\tilde{\psi(t)}}
        			   \frac{r}{\tilde{a}(r)(n \log (r)+\log\log r)}
  \,dr.
\end{equation}
For Case 1 and Case 2, from 
\eqref{Equ:rate-psi-tilde}
 we get the same functions $\tilde{\psi}(t)$ with those in Example \ref{Example:Diri-Form-Rn-radial-a}. 
However, for Case 3,  if $\beta>0$, we have 
$$
	\int_{2}^{+\infty}
        				   \frac{1}{r( \log (r))^{1+\beta}}
		  \,dr<\infty, 
$$
 so we cannot get $\tilde{\psi}$ from \eqref{Equ:rate-psi-tilde}. 
For $\beta= 0$ and $\beta<0$, by \eqref{Equ:rate-psi-tilde}, 
we have 
$\tilde{\psi}(t)=\exp(t)$ and $\tilde{\psi}(t)=\exp(t^{-\frac{1}{\beta}})$ respectively.
Clearly for the case  $\beta= 0$ we obtain the same function 
$\tilde{\psi}(t)$ with the one obtained in Example \ref{Example:Diri-Form-Rn-radial-a}.
But for the case $\beta<0$, we get less precise upper rate function. This is the cost we have to pay for the convenience  of using $\lambda(r)=\tilde{a}(r)$. 
\end{remark}

\begin{remark}
Comparing with Example \ref{Exa:L-rho-aI} in the next section, it turns out that the method using volume growth needs less information of the Dirichlet form, but sometimes gives less exact upper rate function.  
\end{remark}

\section{Comparison theorem for escape rates}\label{Sec:Comparison-Thm}
Let $M$ be a $n$-dimensional complete smooth connected Riemannian manifold. 
Consider a diffusion operator 
\begin{equation}\label{Equ:B-L-operator}
       L=\varDelta + Z
\end{equation}
on $M$, where $\varDelta$ is the Laplace-Beltrami operator on $M$ and $Z$ is a $\C^1$ vector field on $M$. 

Let $o\in M$ be a fixed reference point and set $$\rho_o(x)=d(x,o)$$
 for every $x\in M$. Here $d(\cdot,\cdot)$ is the Riemannian distance function on $M$.

Let $\cut(o)$ denote the cut locus of $o$. 
Suppose that there exists some measurable function $\theta$ on $[0,+\infty)$ such that 
\begin{equation}\label{Equ:L_rho<theta}
L\rho_o\leq \theta(\rho_o)
\end{equation}
on $M\setminus \cut(o)$. 

Note that by a comparison theorem of Bakry and Qian \cite[Theorem 4.2]{BQ05}, 
Inequality \eqref{Equ:L_rho<theta} follows from the curvature-dimension condition (see \cite{Bak94}). 

Let $(X_t,\zeta,\P_x)_{x\in M}$ be the diffusion process on $M$ associated with $L$. Here $\zeta$ is the life time of $X_t$. 
Let  $B_R$ denote the closed geodesic ball  with center $o$ and radius $R>0$. Set $ B_R^o=B_R\setminus \partial B_R$. 
Let $\tau_R$ denote the first exit time of $X_t$ from $B_R$. That is, 
$$
	\tau_R:=\inf\{t\geq 0\colon X_t\notin B_R \}.
$$

Let $(x_t,\P^0_r)$ be the solution to the following stochastic differential equation 
\begin{equation}\label{Equ:radial-process}
	dx_t= \theta(x_t)dt+\sqrt{2}dw_t,\quad x_0=r\geq 0
\end{equation}
on $[0,+\infty)$. Here $w_t$ is a standard Brownian motion, $\theta$ is the function on $[0,+\infty)$ satisfying \eqref{Equ:L_rho<theta}.  
The infinitesimal generator of $x_t$ is given by 
\[
    L_0=\frac{\partial^2}{\partial r^2}+\theta(r)\frac{\partial}{\partial r}.
\]
For every $0<R<\delta_K$, let $\tau_R^0$ be the first exit time of $x_t$ from $[0,R]$. That is,   
$$
	\tau_R^0:=\inf\{t\geq 0\colon x_t\notin [0,R] \}.
$$

We have the following comparison theorem for the upper rate functions of $X_t$ and $x_t$. To some extend, it is a generalization of \cite[Theorem 2.1]{Ich88}, \cite[Proof of Inequality (4.6)]{Stu92} and \cite[Proof of Inequality (2.2)]{GW01}. 

\begin{theorem}\label{Thm:SemigroupComp}
Suppose that there is some measurable function $\theta$ on $[0,+\infty)$ such that  \eqref{Equ:L_rho<theta}
 holds on $M\setminus \cut(o)$. 
Let $R>0$ and $x\in M$ with $r:=\rho_o(x)<R$. 
Then for every $t>0$, $0<\delta<R$, 
\begin{equation}\label{equ:Thm:SemigroupComp}
  \P^0_{r}(x_t<\delta,\, t<\tau_R^0) \leq \P_{x}(\rho_o(X_t)<\delta,\, t<\tau_R)  .
\end{equation}
Therefore, an upper rate function for  $x_t$ is also an upper rate function for $X_t$.     
\end{theorem}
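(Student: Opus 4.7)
The plan is to prove the inequality \eqref{equ:Thm:SemigroupComp} by a pathwise coupling based on the It\^o formula for the radial process together with the one-dimensional comparison theorem for stochastic differential equations; the concluding assertion about upper rate functions will then follow by applying the inequality with $\delta=R(t)$ and sending $R\uparrow\infty$.

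First, I would apply the It\^o formula due to Kendall and Cranston to the radial process $\rho_o(X_t)$. Since $|\nabla\rho_o|=1$ away from $\cut(o)$, the martingale part of $\rho_o(X_t)$ has quadratic variation $2\,dt$, so the Dambis--Dubins--Schwarz procedure produces a one-dimensional Brownian motion $b_t$ (with respect to the filtration of $X_t$) for which
\[
  \rho_o(X_t) = r + \sqrt{2}\,b_t + \int_0^t L\rho_o(X_s)\,\mathbf{1}_{X_s\notin\cut(o)}\,ds - A_t, \qquad t<\zeta,
\]
where $A_t$ is a continuous non-decreasing process supported on $\{s: X_s\in\cut(o)\}$. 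Combining this decomposition with the hypothesis \eqref{Equ:L_rho<theta} and the monotonicity of $A_t$ yields the sub-solution bound
\[
  \rho_o(X_t) \le r + \sqrt{2}\,b_t + \int_0^t \theta(\rho_o(X_s))\,ds,\qquad t<\zeta.
\]

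Next, I would realize the SDE \eqref{Equ:radial-process} on the \emph{same} probability space using the extracted Brownian motion $b_t$, producing a process $y_t$ with $y_0=r$ solving $dy_t=\theta(y_t)\,dt+\sqrt{2}\,db_t$. Because the diffusion coefficient is the constant $\sqrt{2}$, the classical one-dimensional comparison theorem for SDEs, in its sub-solution versus solution form (proved via Tanaka's formula applied to $(\rho_o(X_t)-y_t)^{+}$ together with a Gronwall/Yamada--Watanabe argument), can be invoked to conclude the pathwise domination
\[
  \rho_o(X_t) \le y_t,\qquad t<\zeta.
\]
In particular, the exit time of $y$ from $[0,R]$ is no larger than $\tau_R$, so on the event $\{y_t<\delta,\,y \text{ has not left } [0,R] \text{ by time } t\}$ we automatically have $\rho_o(X_t)<\delta$ and $t<\tau_R$. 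Taking probabilities and noting that $y_t$ in this coupling has the law $\P^0_r$ yields \eqref{equ:Thm:SemigroupComp}. The final statement on upper rate functions then follows by applying the inequality with $\delta=R(t)$, where $R(t)$ is the given upper rate function for $x_t$, letting $R\uparrow\infty$ by completeness of $M$, and invoking a Borel--Cantelli type argument as at the end of the proof of Theorem \ref{Theoem_rate}.

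The main obstacle will be the careful treatment of the cut locus: verifying the non-negativity of $A_t$ and the validity of the Kendall--Cranston decomposition with a merely measurable $\theta$ requires either a smoothing of $\rho_o$ near $\cut(o)$ or an approximation argument on $\theta$. A secondary technical point is the existence of a strong solution to the SDE for $x_t$; pathwise uniqueness is automatic by Yamada--Watanabe thanks to the constant diffusion coefficient, but existence may require $\theta$ to be locally bounded and continuous, so an approximation by continuous $\theta_n\to\theta$ followed by a stability argument for the comparison may be needed before passing to the limit.
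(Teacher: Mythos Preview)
Your approach is correct and takes a genuinely different route from the paper. The paper gives an \emph{analytic} proof: for a smooth non-increasing test function $\phi$ supported in $[0,\delta)$, it sets $u(t,x)=\E_x[\phi(\rho_o(X_t)),\,t<\tau_R]$ and $u_0(t,r)=\E_r[\phi(x_t),\,t<\tau_R^0]$, shows that $u_0(t,\cdot)$ is non-increasing in $r$ (via monotonicity of $x_t$ in its initial value), and then uses $L\rho_o\le\theta(\rho_o)$ together with $\partial_r u_0\le0$ to verify that $v_0(t,x):=u_0(t,\rho_o(x))$ satisfies $(\partial_t-L)v_0\le0$ in the distributional sense on $B_R^o$ (Yau's argument from \cite{Yau76} handles $\cut(o)$). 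The parabolic maximum principle applied to $v_0-u$ then yields $u_0(t,\rho_o(x))\le u(t,x)$, and letting $\phi\uparrow\Eins_{[0,\delta)}$ gives \eqref{equ:Thm:SemigroupComp}. The paper itself notes in a remark that a probabilistic proof along the lines of \cite[Lemma 2.1]{GW01} is available, and your proposal is exactly such an argument.

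Your probabilistic route has the advantage of producing a \emph{pathwise} inequality $\rho_o(X_t)\le y_t$, which makes the final assertion about upper rate functions immediate: if $y_t\le R(t)$ for all large $t$ almost surely, the same holds for $\rho_o(X_t)$ by monotonicity, so no Borel--Cantelli step is needed and you can drop that from your outline. The paper's fixed-$t$ inequality \eqref{equ:Thm:SemigroupComp} is weaker in this respect, and its passage to the ``for all large $t$'' conclusion is left somewhat implicit. Conversely, the analytic proof avoids the Kendall--Cranston machinery and the need for strong solutions or pathwise uniqueness for the comparison SDE, relying only on semigroup regularity and the maximum principle; in particular it sidesteps the regularity issues for $\theta$ that you correctly flag as the main obstacle. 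Both proofs handle the cut locus by invoking a known device (Yau's distributional Laplacian comparison versus Kendall's non-decreasing local-time term $A_t$), so the trade-off is essentially PDE tools versus stochastic calculus.
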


\begin{proof}
Let $\phi$ be a monotone, non-increasing $\C^\infty$-function on $[0,R)$ with compact support in $[0,\delta)$. Set for all $t\geq 0$, $x\in B_R$ and $r\in [0,R]$, 
 \begin{equation}
 u(t,x)=\E_x[\phi(\rho_o(X_t)),\, t<\tau_R]
\end{equation} 
 and 
\begin{equation}\label{equ:u_0_def} 
u_0(t,r)=\E_r[\phi(x_t),\, t<\tau_R^0].
\end{equation}
 It is clear that $u(t,x)\in \C^\infty((0,\infty)\times B_R)$ and $u_0(t,r)\in \C^\infty((0,\infty)\times [0,R])$. Moreover, $u(t,x)$, $u_0(t,r)$ satisfy the following two equations
 \[
 \left\{
 \begin{aligned}
    & \frac{\partial u}{\partial t}-Lu=0 \quad  \mathrm{in}\ (0,\infty)\times  B_R^o,\\
    & u(0,x)=\phi(d(x,o)), \\
    & u(t,x)=0 \quad \mathrm{on}\ \partial B_R^o 
 \end{aligned}
 \right.
 \]
 and
  \[
  \left\{
 \begin{aligned}
    & \frac{\partial u_0}{\partial t}-L_0 u_0=0 \quad  \mathrm{in}\ (0,\infty)\times (0,R),\\
    & u_0(0,r)=\phi(r),\\
    & u_0(t,R)=0
 \end{aligned}
 \right.
 \]
respectively. 

It is clear that $x_t$ is monotone non-decreasing relative to the initial value $r$  (cf. \cite[Lemma 2.1]{Ich88}). 
Since $\phi$ is monotone non-increasing, by \eqref{equ:u_0_def} we get that  the function $u_0(t,r)$ is monotone non-increasing in $r<R$. So 
$$\frac{\partial}{\partial r} u_0(t,r) \leq 0. $$
Let $v_0(t,x)=u_0(t,\rho_o(x))$.
By \eqref{Equ:L_rho<theta},
 for all $x\notin \cut(o)$ with  $\rho_o(x)=r<R$,  we have
  \[
   \begin{aligned}
 \frac{\partial}{\partial t}v_0(t,x)= \frac{\partial}{\partial t}u_0(t,r)\left|_{r=\rho_o(x)}\right. 
 &=
    L_0 u_0(t,r)  \left|_{r=\rho_o(x)}\right. 
\\
      & =\left(\frac{\partial^2}{\partial r^2} +\theta(r)
             \frac{\partial }{\partial r}
             \right)  u_0(t,r) \left|_{r=\rho_o(x)}\right. 
 \\
       &\leq \left( \frac{\partial^2}{\partial r^2} 
                     + (L r) \frac{\partial }{\partial r}  
             \right)u_0(t,r)  \left|_{r=\rho_o(x)}\right. 
       =L v_0(t,x)     . 
   \end{aligned}
 \] 
 Consequently we have
   \begin{equation}\label{equ1:pf1:Thm:SemigroupComp}
      \left(\frac{\partial}{\partial t}-L\right) v_0(t,x)\leq 0 \quad  \mathrm{in}\ (0,\infty)\times (B_R^o\setminus \cut(o)).
   \end{equation}
 Using similar arguments in the appendix of \cite{Yau76}, we have
    \begin{equation*}
      \left(\frac{\partial}{\partial t}-L\right) v_0(t,x) \leq 0 \quad  \mathrm{in}\ (0,\infty)\times B_R^o
   \end{equation*}
  in the distributional sense. 
  Let $U(t,x)=v_0(t,x)-u(t,x)$. Then
    \[
      \left(\frac{\partial}{\partial t}-L\right) U(t,x)\leq 0 \quad  \mathrm{in}\ (0,\infty)\times B_R^o
   \]
  in the distributional sense. Note that for all $x\in B_R^o$, $U(0,x)=0$,  and for all $t\geq 0$, $x\in \partial B_R^o$, $U(t,x)=0$, by the parabolic maximum principle, we have 
  \[U(t,x)\leq 0\]
  for every $(t,x)\in [0,\infty)\times B_R$. 
 That is,  we get 
 \begin{equation}\label{Com:Ex-Er}
 \E_r[\phi(x_t),\, t<\tau_R^0]\leq 
 	\E_x[\phi(\rho_o(X_t)),\, t<\tau_R]
 \end{equation}
 for all $(t,x)\in [0,\infty)\times B_R$ with $r=\rho_o(x)$. 
By letting $\phi\uparrow \Eins_{[0,\delta)}$ on both sides of  \eqref{Com:Ex-Er}, we obtain \eqref{equ:Thm:SemigroupComp}. 
   
If there exists an upper rate function for $x_t$, then 
\(\P_r(\zeta_0=+\infty)=1.\)
Letting $R\to +\infty$ on both sides of \eqref{equ:Thm:SemigroupComp}, we obtain 
\begin{equation}\label{equ:Thm:SemigroupComp-R-infty}
\P^0_{r}(x_t<\delta) \leq \P_{x}(\rho_o(X_t)<\delta,\, t<\tau_{\infty}).  
\end{equation}
So if $R(t)$ is an upper rate function for $x_t$, i.e. 
\[
    \P^0_{r}(x_t\leq R(t)\ \mathrm{for\ all\ sufficiently\ large}\ t)=1,
\]
then we also have 
\[
	\P_{x}(\rho_o(X_t)\leq R(t)\ \mathrm{for\ all\ sufficiently\ large}\ t)=1.
\]
This proves that $X_t$ inherits the upper rate function for $x_t$. 
\end{proof}

\begin{remark}
A probabilistic proof of Theorem \ref{Thm:SemigroupComp} is available by modifying the arguments in the proof of \cite[Lemma 2.1]{GW01}.
\end{remark}

By Theorem \ref{Thm:SemigroupComp} and Corollary \ref{Cora4:1-dim-SDE} shown in the next section we have the following result.

\begin{cora}\label{Coro-L-rho-power}
Suppose that for some $-1\leq \alpha\leq 1$ there exists some constant $K_\alpha>0$ such that 
$$
L\rho_o\leq K_\alpha \rho_o^\alpha
$$
holds on $M \setminus \cut(o)$. Then for some constant $C_\alpha>0$, $g_\alpha(C_\alpha t)$ is an upper rate function for the $L$-diffusion process $X_t$,  where $g_\alpha$ is given by 
\begin{equation}\label{Equ:psi_alpha}
 	g_\alpha(t)=\left\{
	\begin{aligned}
		& \sqrt{t\log\log t}, &\quad & \alpha=-1, \\
		& t^{\frac{1}{1-\alpha}}, & \quad  & -1<\alpha <1, \\
		& \e^t,  &\quad & \alpha=1.
	\end{aligned}	
	\right.
 \end{equation} 
\end{cora}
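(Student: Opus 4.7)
The plan is to combine Theorem \ref{Thm:SemigroupComp} with the one-dimensional escape-rate analysis developed in Section \ref{Sec:escape-one-dim}. Under the hypothesis $L\rho_o\leq K_\alpha \rho_o^\alpha$ on $M\setminus\cut(o)$, I would take $\theta(r)=K_\alpha r^\alpha$ as the radial comparison drift. Theorem \ref{Thm:SemigroupComp} then reduces the problem entirely to finding an upper rate function for the one-dimensional It\^o diffusion
\[
dx_t = K_\alpha x_t^\alpha\,dt + \sqrt{2}\,dw_t,\quad x_0=r\geq 0,
\]
since any such upper rate function is automatically inherited by $\rho_o(X_t)$.

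Next, I would invoke Corollary \ref{Cora4:1-dim-SDE}, which is designed to supply exactly the three piecewise rates asserted in $g_\alpha$. Heuristically the three cases can be read off from the deterministic companion ODE $\dot x = K_\alpha x^\alpha$: for $-1<\alpha<1$ it integrates to $x(t)\sim ((1-\alpha)K_\alpha t)^{1/(1-\alpha)}$ and the Brownian noise only contributes a fluctuation of strictly lower order, giving $g_\alpha(t)=t^{1/(1-\alpha)}$; for $\alpha=1$ the drift is linear in $x$, so the ODE already produces exponential growth $g_1(t)=e^t$ and the noise is negligible; for $\alpha=-1$ the drift is of Bessel type, drift and noise compete at the same order, and the classical Khinchin-type law of the iterated logarithm yields the sharp rate $g_{-1}(t)=\sqrt{t\log\log t}$.

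The main obstacle in this line of attack lies not in the present corollary but in establishing Corollary \ref{Cora4:1-dim-SDE} itself, which is the purpose of the next section. The most delicate case there is $\alpha=-1$: the drift $K_{-1}/r$ is singular at the origin, so one has to control the behaviour of $x_t$ near $0$ and then establish the sharp iterated-logarithm rate $\sqrt{t\log\log t}$ rather than the cruder $\sqrt{t\log t}$ that the volume-growth methods of Section \ref{Sec2} would produce. For $-1<\alpha<1$ and $\alpha=1$ the drift dominates the noise and the rates are accessible either through a direct comparison of $x_t$ with the explicit solution of the associated ODE, or via a Borel-Cantelli scheme analogous to the one used in the proof of Theorem \ref{Theoem_rate}, with the radii $R_n$ chosen to match the ODE's growth scale rather than a geometric doubling. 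Once these one-dimensional rates are in hand, a single application of Theorem \ref{Thm:SemigroupComp} transfers them back to $X_t$ on $M$, yielding the claimed upper rate function $g_\alpha(C_\alpha t)$ for a suitable constant $C_\alpha>0$ absorbing both the comparison factor and the constant $K_\alpha$.
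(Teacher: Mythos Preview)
Your proposal is correct and matches the paper's own argument exactly: the paper states this corollary as an immediate consequence of Theorem~\ref{Thm:SemigroupComp} together with Corollary~\ref{Cora4:1-dim-SDE}, deferring all the one-dimensional work to Section~\ref{Sec:escape-one-dim}. Your additional heuristic discussion of the three cases and of the difficulties in proving Corollary~\ref{Cora4:1-dim-SDE} (especially the $\alpha=-1$ iterated-logarithm case, which the paper handles by citing \cite{Mao08}) is accurate but goes beyond what is needed for the present corollary.
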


\begin{example}
Let $n$ be an integer and $\R^n$ be endowed with the following metric 
 \[
   ds^2=dr^2 +\xi^2(r)d\theta^2,
 \]
 where $(r,\theta)$ is the polar coordinates in $\R^n=\R_+\times \SS^{n-1}$, $\xi(r)$ is a positive smooth function on $\R_+$ satisfying $\xi(0)=0$, $\xi'(0)=1$, and $d\theta^2$ is the standard Riemannian metric on $\SS^{n-1}$. We call $M_\xi:=(\R^n,\xi)$ a model manifold.
 
 The Laplace-Beltrami operator $\varDelta$ on $M_\xi$ 
can be written as follows
\[
  \varDelta=\frac{\partial^2}{\partial r^2} +m(r)\frac{\partial }{\partial r}
  +\frac{1}{\xi^2(r)}\varDelta_{\theta},
\]
where $\varDelta_\theta$ is the standard Laplace operator on $\SS^{n-1}$ and $m(r)$ is the mean curvature function of $M_\xi$ given by
\[
      m(r)=(n-1)\frac{\xi'(r)}{\xi(r)}.
\]
It is clear that we have $\varDelta r = m(r) $. 

In particular, let us take $\xi(r)=\sinh\sqrt{K}r$ for some constant $K>0$. 
Then 
$M_\xi$ is the hyperbolic space $\mathbb{H}^n$, i.e. 
the complete simply connected $n$-dimensional manifold with
constant sectional curvature $-K$. 
Clearly we have 
\[
	\varDelta r = (n-1)\sqrt{K}\coth\sqrt{K}r,\quad r>0.
\]
Note that $ (n-1)\sqrt{K} \coth\sqrt{K}r\to (n-1)\sqrt{K}$ as $r\to +\infty$. 
Hence, by Proposition \ref{Prop3:1-dim-SDE}, for every $\varepsilon>0$,  $(1+\varepsilon)(n-1)\sqrt{K}t$ is an upper rate function for the Brownian motion on $M_\xi$.
\end{example}

\begin{example}\label{Exa:L-rho-aI}
Consider the Dirichlet form $(\EC,\F)$ on $L^2(\R^n,dx)$ given by 
\[
   \EC(f,g)=\sum_{i,j=1}^n\int_{\R^n} 
      a_{ij}(x)\frac{\partial f}{\partial x_i}
	       \frac{\partial g}{\partial x_i}	dx,\quad f,g\in \C_0^\infty(\R^n),
\]
where $a_{ij}$ are continuously differentiable functions on $\R^n$ such that $(a_{ij})$ is positive definite. 
The infinitesimal generator associated with $(\EC,\F)$ is given by 
\begin{equation}\label{Equ:inf-generator1}
	L
	=\sum_{i,j=1}^n \frac{\partial}{\partial x_i}
		\left( a_{ij}(x) \frac{\partial}{\partial x_i}\right)
	=\sum_{i,j=1}^n a_{ij}(x)\frac{\partial^2}{\partial x_i\partial x_j}
			+\sum_{i}^n b_i(x)\frac{\partial}{\partial x_i}
\end{equation}
on $\C^2(\R^n)$, where \[
		b_i(x)=\sum_{j=1}^n \frac{\partial a_{ij}}{\partial x_j}(x).
\]
 We introduce a new Riemannian metric $g=(g_{ij}(x))=(a_{ij})^{-1}$ on $\R^n$. Then 
$$L=\varDelta_g + \nabla_g \left(\frac12\log \det a\right).$$
Here $\varDelta_g$ and  $\nabla_g$ are the Beltrami-Laplace operator and gradient operator on $(\R^n,g)$ respectively. The Riemannian distance function $\rho$ on $(\R^n,g)$ is the same with the intrinsic metric of the Dirichlet form $(\EC,\F)$. 

Suppose that the Riemannian distance function is given by  $\rho_x(y)=\tilde{\rho}(|x-y|)$, $x,y\in\R^n$, for some positive function $\tilde{\rho}\in \C^2([0,\infty))$.  
Then 
\[
	L{\rho}_0(x)=A(x)\tilde{\rho}''(|x|)+\frac{\tilde{\rho}'(|x|)}{|x|}
		\left[ B(x)-A(x)+C(x) \right],
\]
where
\[\begin{aligned}
	A(x)&=\frac{1}{|x|^2}\sum_{i,j=1}^n a_{ij}(x)x_ix_j,\\
	B(x)&=\sum_{i=1}^n a_{ii}(x),\\
	C(x)&=\sum_{i=1} x_ib_i(x).	
\end{aligned}
\]

In particular, let us consider a toy model to illustrate the application. Suppose that $(a_{ij})=\tilde{a}(|x|)I$ for some strictly positive and continuously differentiable function $\tilde{a}$ on $[0,\infty)$. As in Example \ref{Example:Diri-Form-Rn-radial-a},  for the Riemannian distance function we have 
$$\rho_x(y)\approx\tilde{\rho}(|x-y|),\quad x,y\in\R^n,$$ where 
$${\tilde\rho}(s)= \int_0^{s}\frac{1}{\sqrt{\tilde{a}(s)} }\,ds.$$ 

Let $r=|x|$. Then 
\[
	L{\rho}_0(x)=-\frac{ \tilde{a}'(r) }{2 \sqrt{\tilde{a}(r)}}
			+ \frac{(n-1)\sqrt{\tilde{a}(r)}}{r}.					
\]

Now we apply Corollary \ref{Coro-L-rho-power} to the three cases investigated  in Example \ref{Example:Diri-Form-Rn-radial-a}. That is, we look for functions  $\psi$ and $\tilde{\psi}$ satisfying  \eqref{Equ:rate_fnt_psi-example3-4}
and \eqref{Equ:rate_fnt_tilde_psi} for some constants $C>0$ and $C_1>0$ respectively. 

\textbf{Case 1.}
Suppose that $a\equiv 1$. Then    
  $\rho_x(y)=|x-y|$ and $$L\rho_x=\frac{n-1}{\rho_x}.$$
So we have  
   $\psi(t)=\sqrt{t\log\log t}$
and
$\tilde{\psi}(t)=\sqrt{t\log\log t}.$

\textbf{Case 2.}
Suppose that 
  \(a(x)=(1+|x|)^{\alpha}\) for some $\alpha<2$. 
Then 
  $\rho_x(y)\approx|x-y|^{1-\alpha/2}$. 
So we have 
$$L\rho_x\leq  \frac{C'}{\rho_x} $$
 for some constant $C'>0$. 
We get 
   $\psi(t)=\sqrt{t\log\log t}$ and  
    $\tilde{\psi}(t)=(t\log \log t)^{1/(2-\alpha)}$.

\textbf{Case 3.}
Suppose that \(a(x)=(1+|x|)^2[\log(1+|x|)]^\beta
  \)
  for some $\beta\leq 1$. 
Then 
  $\rho_x(y)\approx [\log(1+|x-y|)]^{1-\beta/2}$.
Hence for some constant $C'>0$, 
 $$ L\rho_x\leq C' \rho_x^{\frac{\beta}{2-\beta}}.$$  
We get the same functions $\psi$ and $\tilde{\psi}$ as in Case 3 of Example \ref{Example:Diri-Form-Rn-radial-a}.
\end{example}

\section{Escape rate of one dimensional It\^o process}\label{Sec:escape-one-dim} 
Following the arguments in \cite[\S 17]{GS72}, we include here a short study of 
the upper rate function  (with respect to the Eculidean metric) for a general one dimensional It\^o diffusion process. This is useful for the application of Theorem \ref{Thm:SemigroupComp}.  

Consider the following one dimensional stochastic differential equation
\begin{equation}\label{Equ:1-dim-SDE}
	dz_t=b(z_t)dt + \sigma(z_t)dw_t,\quad t\geq 0,  
\end{equation}
where $b$ and $\sigma>0$ are measurable functions on $[0,+\infty)$, $w_t$ is the Brownian motion on $[0, +\infty)$. 

We start from the following simple result. 

\begin{prop}\label{Prop1:1-dim-SDE}
	Let $z_t$ satisfy \eqref{Equ:1-dim-SDE}. Suppose that the following conditions hold: 
	\begin{enumerate}
		\item For large enough $x>0$, $b(x)$ is bounded above by some constant $b_0>0$, i.e. $$b(x)\leq b_0\quad \textrm{as}\ x\to +\infty.$$
		\item There exist some constants $\alpha<1$ and $C_\sigma>0$ such that for all $x> 0$,
		\begin{equation}\label{Equ0:Prop1:1-dim-SDE} 
			   \sigma^2(x)\leq C_\sigma(1+x^\alpha) . 
		 \end{equation}  
	\end{enumerate}
Then for every $\varepsilon>0$, $(b_0+\varepsilon)t$ is an upper rate function for $z_t$. 
\end{prop}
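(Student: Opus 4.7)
The plan is to decompose the solution as
\[
	z_t = z_0 + \int_0^t b(z_s)\,ds + M_t,\qquad M_t:=\int_0^t \sigma(z_s)\,dw_s,
\]
where $M_t$ is a continuous local martingale with $\langle M\rangle_t = \int_0^t \sigma^2(z_s)\,ds$, and to reduce the claim to two ingredients: (a) $M_t/t\to 0$ almost surely, and (b) a last-exit argument that converts the asymptotic drift bound into a bound on $\int_0^t b(z_s)\,ds$.

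Granted (a), the drift part goes as follows. By hypothesis~(1), fix $K>0$ with $b(x)\leq b_0$ for $x\geq K$, and by the local boundedness of $b$ implicit in the SDE, pick $B\geq b_0$ with $b\leq B$ on $[0,K]$. I work on the a.s.\ event where $\sup_{s\leq t}|M_s|/t \to 0$, fix $\varepsilon>0$, and for $t$ large set
\[
	s^\ast := \sup\{s\in[0,t] : z_s\leq K\},
\]
with the convention $s^\ast:=0$ if this set is empty. Then $z_{s^\ast}\leq K\vee z_0$, $b(z_s)\leq b_0$ on $(s^\ast,t]$, so
\[
	z_t \leq (K\vee z_0) + b_0(t-s^\ast) + 2\sup_{s\leq t}|M_s| \leq (b_0+\varepsilon)t
\]
once $t$ is large enough that the first and third terms each fall below $\varepsilon t/2$.

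The crux is (a), which I would establish by a second-moment bootstrap. Localizing by $\tau_n:=\inf\{t:z_t>n\}$ makes $M^{\tau_n}$ a genuine $L^2$-martingale; set $Q_n(t):=\E\langle M^{\tau_n}\rangle_t$. Using the decomposition, Doob's $L^2$-inequality and the concavity of $x\mapsto x^{\alpha/2}$,
\[
	\E z_{s\wedge\tau_n}^\alpha \leq C\bigl(1+s^\alpha + Q_n(s)^{\alpha/2}\bigr),
\]
which combined with hypothesis~(2) yields the self-referential estimate
\[
	Q_n(t) \leq C\Bigl(t^{1+\alpha} + \int_0^t Q_n(s)^{\alpha/2}\,ds\Bigr),
\]
with $C$ independent of $n$. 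Since $\alpha(1+\alpha)/2 < 1+\alpha$ whenever $\alpha<1$, the ansatz $Q_n(s)\leq C's^{1+\alpha}$ is self-reproducing, and a standard nonlinear Gronwall iteration delivers $Q_n(t)\leq C't^{1+\alpha}$ uniformly in $n$. Non-explosion (from the sublinear growth of $\sigma^2$ and the upper bound on $b$) lets me pass to $n\to\infty$ and conclude $Q(t)\leq C't^{1+\alpha}$. Doob's maximal inequality then gives
\[
	\P^0_r\Bigl(\sup_{s\leq 2^{n+1}}|M_s|>\delta\cdot 2^n\Bigr) \leq \frac{4Q(2^{n+1})}{\delta^2\,4^n} \leq C''\delta^{-2}\,2^{(\alpha-1)n},
\]
which is summable because $\alpha<1$. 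Borel--Cantelli yields $\limsup_{t\to\infty}|M_t|/t\leq \delta$ a.s., and letting $\delta\downarrow 0$ along a countable sequence proves (a).

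The main obstacle is closing the Gronwall bootstrap without a circular starting bound, which is why the localization by $\tau_n$ matters. The hypothesis $\alpha<1$ plays a double role: it makes the ansatz self-reproducing (via $(1+\alpha)/2<1$) and it secures summability in Borel--Cantelli (via the exponent $\alpha-1<0$); both uses are tight, so the argument exactly matches the strength of the hypothesis.
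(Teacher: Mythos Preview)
Your approach matches the paper's: decompose $z_t$ into drift plus martingale, show $t^{-1}M_t\to 0$ almost surely, and bound the drift integral. The paper's proof is considerably shorter because it delegates step~(a) to \cite[\S17, Lemma~1]{GS72} rather than working out your moment bootstrap, and it handles the drift with the one-line remark ``we only need to consider the path of $z_t$ which wanders out to infinity'' rather than your more careful last-exit argument; in that sense your write-up is actually more complete than the paper's.
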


\begin{proof}
As $z_t$ satisfies \eqref{Equ:1-dim-SDE}, we have 
\begin{equation}\label{Equ1:Prop1:1-dim-SDE}
 	z_t=z_0+ \int_0^t b(z_s)ds +\int_0^t \sigma(z_s)dw_s. 
 \end{equation}
 By \eqref{Equ0:Prop1:1-dim-SDE} we have (cf. \cite[\S 17, Lemma 1]{GS72})
\begin{equation}\label{Equ3:Prop1:1-dim-SDE}
	\P_z\left(\lim_{t\to +\infty} \frac{1}{t} 
		\int_0^t \sigma(z_s)dw_s =0
	\right)=1.
\end{equation}
We only need to consider the path of $z_t$ which wanders out to infinity. Because $b(x)\leq b_0$ as $x\to +\infty$, 
we have 
\begin{equation}\label{Equ2:Prop1:1-dim-SDE}
	\int_0^t b(z_s)ds\leq b_0 t
\end{equation}
for sufficiently large $t>0$.  
Thus the proof is complete by combining 
\eqref{Equ1:Prop1:1-dim-SDE}, \eqref{Equ3:Prop1:1-dim-SDE} and 
\eqref{Equ2:Prop1:1-dim-SDE}. 
\end{proof}

With the help of the previous proposition and It\^o's formula, we have the following result. 

\begin{prop}\label{Prop2:1-dim-SDE}
Let $z_t$ satisfy \eqref{Equ:1-dim-SDE}.  Let $f(x)$ be  an increasing, twice continuously differentiable function and $g$ the inverse function of $f$. 
Suppose that the following conditions hold: 
\begin{enumerate}
	\item There exists a constant $b_0>0$ such that for large enough $x>0$, 
		\begin{equation}\label{Equ1:Prop2:1-dim-SDE}
			b(g(x))f'(g(x))+\frac12\sigma^2(g(x))f''(g(x))\leq b_0.
		\end{equation}
	\item There exist some constants $C>0$, $\alpha<1$ such that for all $x>0$,
		\begin{equation}\label{Equ2:Prop2:1-dim-SDE}
			\sigma(g(x))f'(g(x))\leq C(1+x^\alpha). 
		\end{equation}	
\end{enumerate}
Then for $\varepsilon>0$, $g((b_0+\varepsilon)t)$ is an upper rate function for $z_t$.
\end{prop}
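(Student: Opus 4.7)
The natural strategy is to apply It\^o's formula to $y_t := f(z_t)$, reducing the problem to Proposition \ref{Prop1:1-dim-SDE} applied to $y_t$. Since $f \in \C^2$ and $z_t$ is a continuous semimartingale solving \eqref{Equ:1-dim-SDE}, It\^o's formula yields
\begin{equation*}
dy_t = \bigl[b(z_t)f'(z_t) + \tfrac12 \sigma^2(z_t) f''(z_t)\bigr]\, dt + \sigma(z_t) f'(z_t)\, dw_t.
\end{equation*}
Substituting $z_t = g(y_t)$, the process $y_t$ satisfies a one-dimensional It\^o equation of the same form as \eqref{Equ:1-dim-SDE}, with drift $\tilde b(y) := b(g(y))f'(g(y)) + \tfrac12 \sigma^2(g(y))f''(g(y))$ and diffusion coefficient $\tilde\sigma(y) := \sigma(g(y))f'(g(y))$.

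By hypothesis (1), $\tilde b(y) \le b_0$ for large $y$, and by hypothesis (2), $\tilde\sigma(y) \le C(1+y^\alpha)$ with $\alpha<1$; these are precisely the growth conditions required by Proposition \ref{Prop1:1-dim-SDE} for $y_t$. Invoking that proposition, for every $\varepsilon > 0$,
\begin{equation*}
\P\bigl(y_t \le (b_0+\varepsilon)t \text{ for all sufficiently large } t\bigr) = 1.
\end{equation*}
Since $f$ is increasing, so is its inverse $g$, and the event $\{y_t \le (b_0+\varepsilon)t\}$ coincides with $\{z_t = g(y_t) \le g((b_0+\varepsilon)t)\}$. This gives the claimed upper rate function for $z_t$.

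The main subtlety is matching the growth exponents between the two propositions: Proposition \ref{Prop1:1-dim-SDE} controls $\sigma^2$, whereas hypothesis (2) gives a bound on $\tilde\sigma$ itself. Squaring yields $\tilde\sigma^2(y) \le 2C^2(1+y^{2\alpha})$, which fits directly into Proposition \ref{Prop1:1-dim-SDE} when $2\alpha<1$; for $\alpha \in [\tfrac12, 1)$ one re-runs the argument behind that proposition (the strong law for continuous martingales $t^{-1}M_t \to 0$, which only needs $\langle M \rangle_t$ to grow strictly slower than $t^2$), and this condition is comfortably satisfied here. Beyond this bookkeeping, the proof is essentially an application of It\^o's formula combined with the monotonicity of $g$.
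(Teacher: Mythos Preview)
Your proof is correct and follows exactly the same route as the paper: apply It\^o's formula to $f(z_t)$, identify the resulting drift and diffusion coefficients, invoke Proposition \ref{Prop1:1-dim-SDE}, and transfer the conclusion back through the increasing inverse $g$. In fact your final paragraph is more careful than the paper's own argument, which simply cites Proposition \ref{Prop1:1-dim-SDE} without commenting on the mismatch between the bound on $\tilde\sigma$ and the required bound on $\tilde\sigma^2$; your observation that the underlying martingale strong law only needs $\langle M\rangle_t = o(t^2)$ is the right way to close that gap.
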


\begin{proof}
Let $\tilde{z}_t=f(z_t)$. By It\^o's formula we have
\begin{equation}\label{Equ4:Prop2:1-dim-SDE} 
		d\tilde{z}_t=\hat{b}(\tilde{z}_t)dt + \hat{\sigma}(\tilde{z}_t)dw_t,  
\end{equation}
where 
\[
	\begin{aligned}
		\hat{b}(x)&=b(g(x))f'(g(x))+\frac12\sigma^2(g(x))f''(g(x)),\\
		\hat{\sigma}(x)&=\sigma(g(x))f'(g(x)).
	\end{aligned}
\]
By Proposition \ref{Prop1:1-dim-SDE} as well as Conditions \eqref{Equ1:Prop2:1-dim-SDE} and \eqref{Equ2:Prop2:1-dim-SDE},  for every  $\varepsilon>0$, 
$(b_0+\varepsilon)t$ is an upper rate function for $\tilde{z}_t$. So 
$g((b_0+\varepsilon)t)$ is an upper rate function for $z_t$ since $z_t=g(\tilde{z}_t)$. Thus the proof is complete. 
\end{proof}

\begin{prop}\label{Prop3:1-dim-SDE}
Let $z_t$ satisfy \eqref{Equ:1-dim-SDE}. 
Let $\tilde{b}$ be a positive function on $[0,+\infty)$ such that 
$b(x)\leq \tilde{b}(x)$ holds for large enough $x>0$ and
 \(
	\int^{+\infty} \frac{1}{\tilde{b}(s)}\, ds=+\infty.  
\)
Let $g$ be a function on $[0,\infty)$ definded  by
\(
	t=\int_0^{g(t)} \frac{1}{\tilde{b}(s)}\, ds. 
\)
Suppose that the following conditions hold:
\begin{enumerate}
\item There exist some constants $C_1>0$ and $\alpha<1$ such that for all $x>0$, 
	\[
		\frac{\sigma(g(x))}{\tilde{b}(g(x))}\leq C_1(1+x^\alpha).
	\]
\item There exists some constant $C_2>0$ such that for all $x>0$, 
	\[
		-\left( \frac{\sigma(x)}{\tilde{b}(x)} \right)^2 \tilde{b}'(x)\leq C_2. 
	\]
\end{enumerate}
Then for every $\varepsilon>0$, $g ((C+\varepsilon)t)$ is an upper rate function for $z_t$, 
where $C=1+C_2/2$.
\end{prop}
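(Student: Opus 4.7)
The strategy is to reduce Proposition \ref{Prop3:1-dim-SDE} directly to Proposition \ref{Prop2:1-dim-SDE} by choosing $f$ to be the inverse of $g$, so that the growth exponent built into $g$ transfers into an affine upper rate for the image process.

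First I would set
\[
	f(x):=\int_0^{x}\frac{1}{\tilde{b}(s)}\,ds,
\]
which is well defined, strictly increasing and (under the implicit regularity of $\tilde{b}$ present in hypothesis (2)) twice continuously differentiable, with $f'(x)=1/\tilde{b}(x)$ and $f''(x)=-\tilde{b}'(x)/\tilde{b}^2(x)$. The defining relation $t=\int_0^{g(t)}\tilde{b}(s)^{-1}\,ds$ then says exactly $f=g^{-1}$, so that $g((C+\varepsilon)t)$ is an upper rate function for $z_t$ iff $(C+\varepsilon)t$ is an upper rate function for the transformed process $\tilde{z}_t=f(z_t)$.

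Next I would verify the two hypotheses of Proposition \ref{Prop2:1-dim-SDE}. For hypothesis (1), substitute the derivatives of $f$:
\[
	b(g(x))f'(g(x))+\tfrac12\sigma^2(g(x))f''(g(x))
	=\frac{b(g(x))}{\tilde{b}(g(x))}-\frac{1}{2}\,\frac{\sigma^2(g(x))\,\tilde{b}'(g(x))}{\tilde{b}^2(g(x))}.
\]
Since $g(x)\to+\infty$ as $x\to+\infty$, the assumption $b\leq\tilde{b}$ at infinity gives $b(g(x))/\tilde{b}(g(x))\leq 1$ for large $x$, while hypothesis (2) of Proposition \ref{Prop3:1-dim-SDE}, applied at the point $g(x)$, bounds the second term by $C_2/2$. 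Thus the left-hand side is at most $1+C_2/2=C$ for large enough $x$, giving Proposition \ref{Prop2:1-dim-SDE}(1) with $b_0=C$. For hypothesis (2) of Proposition \ref{Prop2:1-dim-SDE}, note simply that
\[
	\sigma(g(x))f'(g(x))=\frac{\sigma(g(x))}{\tilde{b}(g(x))}\leq C_1(1+x^{\alpha})
\]
is precisely hypothesis (1) of Proposition \ref{Prop3:1-dim-SDE}.

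Applying Proposition \ref{Prop2:1-dim-SDE} yields, for every $\varepsilon>0$, that $g((C+\varepsilon)t)$ is an upper rate function for $z_t$, which is the claim. The whole argument is essentially bookkeeping; the only subtle point is to make sure one only invokes the bound $b\leq\tilde{b}$ on the tail (where $g(x)$ has already entered the regime guaranteed by the hypothesis), and not globally, so that the ``$b_0$'' in Proposition \ref{Prop2:1-dim-SDE} really can be taken equal to $C=1+C_2/2$.
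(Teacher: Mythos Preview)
Your proof is correct and follows essentially the same approach as the paper: define $f(x)=\int_0^x \tilde{b}(s)^{-1}\,ds$ so that $f=g^{-1}$, compute $f'=1/\tilde{b}$ and $f''=-\tilde{b}'/\tilde{b}^2$, and verify the two hypotheses of Proposition \ref{Prop2:1-dim-SDE} exactly as you do. The only cosmetic difference is that the paper phrases the regularity issue by defining $f$ via the integral only for large $x$ and then extending it smoothly to all of $[0,\infty)$, whereas you appeal directly to the implicit differentiability of $\tilde{b}$; the substance is the same.
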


\begin{proof}
	Let 
		$$ f(x)=\int_0^x	 \frac{1}{\tilde{b}(u)} du$$
	for sufficiently large $x>0$. Then we extend $f$ in such a way that it is increasing on the rest of the half line $[0,\infty)$, and that $f'$, $f''$ exist. 	It is clear that for sufficiently large $x>0$, 
\[
	\begin{aligned}
		\sigma(g(x))f'(g(x)) = \frac{\sigma(g(x))}{\tilde{b}(g(x))}\leq C_1(1+x^\alpha),
\end{aligned}
\]
and
\[			
\begin{aligned}
		   &  b(g(x))f'(g(x))+\frac12\sigma^2(g(x))f''(g(x)) \\
		 = & \frac{b(g(x))}{\tilde{b}(g(x))}
		-\frac12 \left( \frac{\sigma(g(x))}{\tilde{b}(g(x))} \right)^2 \tilde{b}'(g(x))
		\leq 1+\frac12 C_2.
	\end{aligned}
\]	
So Conditions \eqref{Equ1:Prop2:1-dim-SDE} and \eqref{Equ2:Prop2:1-dim-SDE} are satisfied. 
Hence the proof is finished by applying Proposition \ref{Prop2:1-dim-SDE}.  
\end{proof}

%

As an application of Proposition \ref{Prop3:1-dim-SDE}, let us give an upper rate function for $x_t$ satisfying \eqref{Equ:radial-process}.
\begin{cora}\label{Cora4:1-dim-SDE}
Let $x_t$ satisfy \eqref{Equ:radial-process}. Suppose that 
for some $-1\leq \alpha\leq 1$, there exists some constant $\theta_\alpha>0$ such that  
$$\theta(x)\leq \theta_\alpha x^\alpha $$
holds for large enough $x>0$.  
Then for some constant $C_\alpha>0$, $g_\alpha(C_\alpha t)$ is an upper rate function for $x_t$, 
where $g_\alpha$ is given by
\eqref{Equ:psi_alpha}. 
\end{cora}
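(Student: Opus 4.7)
The plan is to apply Proposition \ref{Prop3:1-dim-SDE} directly to the one dimensional SDE \eqref{Equ:radial-process}, choosing the majorant $\tilde{b}$ so that it absorbs the given power-type upper bound on the drift. Since the diffusion coefficient of $x_t$ is the constant $\sigma \equiv \sqrt{2}$ and the drift satisfies $b(x):=\theta(x)\leq \theta_\alpha x^\alpha$ for large $x$, the natural choice is $\tilde{b}(x)=\theta_\alpha x^\alpha$ on $[1,\infty)$, extended to a strictly positive $C^1$ function on $[0,\infty)$. The divergence condition $\int^{\infty}\tilde{b}(s)^{-1}\,ds=\infty$ then holds precisely because $\alpha\leq 1$.

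Next I would solve explicitly for the function $g$ in Proposition \ref{Prop3:1-dim-SDE} in each of the three ranges of $\alpha$. For $-1<\alpha<1$, the defining relation $t=\int_0^{g(t)}\tilde{b}(s)^{-1}\,ds$ yields $g(t)\sim (\theta_\alpha(1-\alpha)t)^{1/(1-\alpha)}$, which is $g_\alpha(C_\alpha t)$ for an appropriate constant $C_\alpha>0$. For $\alpha=1$ the integration of $1/(\theta_1 s)$ gives $g(t)\sim e^{\theta_1 t}$, matching the form $g_1(C_1 t)$. For $\alpha=-1$ one obtains $g(t)\sim\sqrt{2\theta_{-1}t}$; this is \emph{stronger} than $\sqrt{C_{-1}t\log\log(C_{-1}t)}$ for large $t$, so the weaker rate $g_{-1}(C_{-1}t)$ asserted in the corollary is certainly a valid upper rate function.

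It remains to verify the two technical hypotheses of Proposition \ref{Prop3:1-dim-SDE}. Condition (1), $\sigma(g(x))/\tilde{b}(g(x))\leq C_1(1+x^\beta)$ for some $\beta<1$, becomes $\sqrt{2}/(\theta_\alpha g(x)^\alpha)$. Substituting the explicit $g$ computed above, this ratio grows polynomially with exponent $-\alpha/(1-\alpha)$ when $-1<\alpha<1$ (which is strictly below $1$ for every such $\alpha$), grows like $\sqrt{x}$ when $\alpha=-1$, and decays exponentially when $\alpha=1$; in each case a suitable $\beta<1$ exists. Condition (2), $-(\sigma/\tilde{b})^2\tilde{b}'\leq C_2$, reduces to $-2\alpha/(\theta_\alpha x^{\alpha+1})$, which is non-positive when $\alpha\geq 0$, decays to zero when $-1<\alpha<0$, and equals the constant $2/\theta_{-1}$ when $\alpha=-1$; hence it is bounded above on $[1,\infty)$ throughout $-1\leq\alpha\leq 1$.

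The main nuisance will be the bookkeeping at the boundary values $\alpha=\pm 1$: one must extend $\tilde{b}$ smoothly and positively near the origin (so that $g$ is well defined on all of $[0,\infty)$ and not just for large arguments), and one must accept that at $\alpha=-1$ the machinery of Proposition \ref{Prop3:1-dim-SDE} produces the tighter rate $O(\sqrt{t})$ rather than the LIL-type rate $\sqrt{t\log\log t}$ appearing in the statement; passing from the former to the latter is automatic because $\sqrt{t}\leq\sqrt{t\log\log t}$ for all $t$ sufficiently large.
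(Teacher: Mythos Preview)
For $-1<\alpha\le 1$ your plan coincides with the paper's: apply Proposition~\ref{Prop3:1-dim-SDE} with $\sigma\equiv\sqrt2$ and $\tilde b(x)=\theta_\alpha x^\alpha$ (extended positively near the origin), and read off $g_\alpha$ from the defining integral. Your verification of conditions (1) and (2) in that range is correct.

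The case $\alpha=-1$, however, contains a genuine error. You claim that Proposition~\ref{Prop3:1-dim-SDE} yields the rate $O(\sqrt t)$, which you then relax to $\sqrt{t\log\log t}$. But $O(\sqrt t)$ is \emph{not} an upper rate function for $x_t$ in this case: take $\theta\equiv 0$, which certainly satisfies $\theta(x)\le\theta_{-1}x^{-1}$ for any $\theta_{-1}>0$; then $x_t=x_0+\sqrt2\,w_t$, and by Khinchin's law of the iterated logarithm $x_t$ exceeds $C\sqrt t$ infinitely often for every constant $C$. So the assertion ``Proposition~\ref{Prop3:1-dim-SDE} produces the tighter rate $O(\sqrt t)$'' is simply false. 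The reason is that after the change of variable $f(x)=x^2/(2\theta_{-1})$ the transformed diffusion coefficient is $\hat\sigma(x)=\sigma(g(x))/\tilde b(g(x))\sim 2\sqrt{x/\theta_{-1}}$, whence $\hat\sigma^2(x)\sim x$; this sits exactly at the boundary of the sublinear-growth hypothesis \eqref{Equ0:Prop1:1-dim-SDE} underlying Propositions~\ref{Prop1:1-dim-SDE}--\ref{Prop3:1-dim-SDE}, and the key ingredient $\tfrac1t\int_0^t\hat\sigma(\tilde z_s)\,dw_s\to 0$ no longer holds.

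The paper is aware of this and does \emph{not} attempt to extract $\alpha=-1$ from Proposition~\ref{Prop3:1-dim-SDE}. Its proof splits: for $\alpha=-1$ it invokes the iterated-logarithm result \cite[Chapter~2, Theorem~5.4]{Mao08} directly, and only for $-1<\alpha\le 1$ does it apply Proposition~\ref{Prop3:1-dim-SDE}. Your argument should make the same split.
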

\begin{proof}
	The case when $\alpha=-1$ follows from \cite[Chapter 2, Theorem 5.4]{Mao08}.
For the case $-1< \alpha \leq 1$, we only need to apply Proposition \ref{Prop3:1-dim-SDE} with $\sigma=\sqrt{2}$ and $\tilde{b}(x)=\theta_\alpha x^\alpha$ for large enough $x>0$. The function $g_\alpha(t)$ is obtained by solving the following equation  
\[
	t=\int_0^{g_\alpha(t)} \frac{1}{\theta_\alpha x^\alpha} dx.
\]	
\end{proof}

\def\cprime{$'$}

%

\end{document}